\documentclass[oneside,english]{amsart}
\usepackage[T1]{fontenc}
\usepackage[latin9]{inputenc}
\usepackage{amsthm}
\usepackage{amstext}

\makeatletter

\providecommand{\tabularnewline}{\\}

\numberwithin{equation}{section}
\numberwithin{figure}{section}
\theoremstyle{plain}
\newtheorem{thm}{\protect\theoremname}
\ifx\proof\undefined
\newenvironment{proof}[1][\protect\proofname]{\par
\normalfont\topsep6\p@\@plus6\p@\relax
\trivlist
\itemindent\parindent
\item[\hskip\labelsep\scshape #1]\ignorespaces
}{%
\endtrivlist\@endpefalse
}
\providecommand{\proofname}{Proof}
\fi



\makeatother

\usepackage{babel}
\providecommand{\theoremname}{Theorem}

\begin{document}

\title{\noindent General solutions of sums of consecutive cubed integers
equal to squared integers }

\author{\noindent Vladimir Pletser}

\address{\noindent European Space Research and Technology Centre, ESA-ESTEC
P.O. Box 299, NL-2200 AG Noordwijk, The Netherlands E-mail: Vladimir.Pletser@esa.int }
\begin{abstract}
All integer solutions $\left(M,a,c\right)$ to the problem of the
sums of $M$ consecutive cubed integers $\left(a+i\right)^{3}$ ($a>1$,
$0\leq i\leq M-1$) equaling squared integers $c^{2}$ are found by
decomposing the product of the difference and sum of the triangular
numbers of $\left(a+M-1\right)$ and $\left(a-1\right)$ in the product
of their greatest common divisor $g$ and remaining square factors
$\delta^{2}$ and $\sigma^{2}$, yielding $c=g\delta\sigma$. Further,
the condition that $g$ must be integer for several particular and
general cases yield generalized Pell equations whose solutions allow
to find all integer solutions $\left(M,a,c\right)$ showing that these
solutions appear recurrently. In particular, it is found that there
always exist at least one solution for the cases of all odd values
of $M$, of all odd integer square values of $a$, and of all even
values of $M$ equal to twice an integer square.

\noindent \textbf{Keywords}: Sums of consecutive cubed integers equal
to square integers ; Quadratic Diophantine equation ; Generalized
Pell equation ; Fundamental solutions ; Chebyshev polynomials
\end{abstract}

\maketitle
MSC2010 : 11E25 ; 11D09 ; 33D45

\section{\noindent Introduction}

\noindent It is known since long that the sum of $M$ consecutive
cubed positive integers starting from $1$ equals the square of the
sum of the $M$ consecutive integers, which itself equals the triangular
number $\triangle_{M}$ of the number of terms $M$, 
\begin{equation}
\sum_{i=1}^{M}i^{3}=\left(\sum_{i=1}^{M}i\right)^{2}=\left(\frac{M\left(M+1\right)}{2}\right)^{2}=\triangle_{M}^{2}\label{eq:1-2}
\end{equation}

\noindent for $\forall M\in\mathbb{Z}^{+}$. The question whether
this remarkable result can be extended to other integer values of
the starting point, i.e. whether the sum of consecutive cubed positive
integers starting from $a\neq1$ is also a perfect square 
\begin{equation}
\sum_{i=0}^{M-1}\left(a+i\right)^{3}=c^{2}\label{eq:2-3}
\end{equation}
has been addressed by several authors but has received so far only
partial answers. 

\noindent With the notation of this paper, Lucas stated \cite{key-1-40}
that the only solutions for $M=5$ are $a=0,1,96$ and $118$ (missing
the solution $a=25$, see further Table 1), and that there are no
other solutions for $M=2$ than $a=1$. Aubry showed \cite{key-1-41}
that a solution for $M=3$ is $a=23$, $c=204$, correcting Lucas'
statement that there are no other solution for $M=3$ than $a=1$.
Other historical accounts can be found in \cite{key-1-13}. Cassels
proved \cite{key-1-43} by using the method of finding all integral
points on a given curve of genus $1$ $y^{2}=3x\left(x^{2}+2\right)$
(with $x=a+1$, $y=c$ in this paper notations), that the only solutions
for $M=3$ are $a=-1,0,1$ and $23$. Stroeker obtained \cite{key-1-44}
complete solutions for $2\leq M\leq50$ and $M=98$, using estimates
of lower bound of linear forms in elliptic logarithms to solve elliptic
curve equations of the form $Y^{2}=X^{3}+dX$ where $d=n^{2}\left(n^{2}-1\right)/4$,
$X=nx+n\left(n-1\right)/2$, $Y=ny$ (with $n=M$, $x=a$, $y=c$
in this paper notations). The method reported, although powerful,
appears long and difficult and caused some problems for the cases
$M=41$ and $44$. Stroeker remarked also that $M=a=33$ with $c=2079=33\times63$.
This is not the single occurrence of $M=a$, as it occurs also for
$M=a=2017,124993,7747521,...$ (see \cite{key-5-2,key-5-3}).

\noindent One of the reasons that these previous attempts to find
all solutions $\left(M,a,c\right)$ to (\ref{eq:2-3}) were only partially
successful was most likely due to the approach taken to start the
search for solutions for single values of $M$, one by one and in
an increasing order of $M$ values. The method proposed in this paper
is to tackle the problem in a different way and instead of looking
at each individual values of $M$ one by one, to consider the problem
in a more global approach by comparing different sets of known solutions,
and instead of listing solutions in increasing order of $M$ values,
to look at two other parameters, $\delta$ and $\sigma$, defined
further. This new beginning then leads to a more classical approach
using general solutions of Pell equations, that allows to find all
solutions in $\left(M,a,c\right)$ of (\ref{eq:2-3}) for all possible
cases. Note that Pell equations were already used previously by various
authors (e.g. Catalan \cite{key-1-45,key-1-46}, Cantor \cite{key-1-47},
Richaud \cite{key-1-48}) in the 19th century in attempts to solve
the present problem, albeit without reaching a complete resolution
of the problem. 

\noindent The approach proposed in this paper includes three steps.

\noindent Step 1 in Section 2 is based on the decomposition of the
product of the difference $\Delta$ and the sum $\Sigma$ of the two
triangular numbers of $\left(a+M-1\right)$ and $\left(a-1\right)$
in simple factors, $g,\delta,\sigma$ where $g=\gcd\left(\Delta,\Sigma\right)$,
allowing to find general expressions of $a$ and $c$ in function
of $M,\delta$ and $\sigma$ that are always solutions of (\ref{eq:2-3}).

\noindent In step 2 in Section 3, some conditions on $\delta$ and
$\sigma$ are explored to obtain three particular cases of solutions
yielding specific expressions of $M$ in function of $k,$ $\forall k\in\mathbb{Z}^{+}$,
including the case of $M$ taking all odd positive integer values.
Section 4 recalls some basics on Pell equation solutions to introduce
the third step.

\noindent In step 3 in Section 5, a general solution in $\left(M,a,c\right)$
is found for all values of $\delta$ and $\sigma$, based on solutions
of simple and generalized Pell equations involving Chebyshev polynomials,
allowing to find all the solutions in $\left(M,a,c\right)$ to (\ref{eq:2-3}).
As an alternative to step 3, recurrence relations are deduced in Section
6. Section 7 summarizes all findings.

\noindent Trivial solutions are not considered here. For instance,
$M\leq0$ is meaningless; for $M=1$, the only solution is obviously
$a=\alpha^{2}$, yielding $\left(M,a,c\right)=\left(1,\alpha^{2},\alpha^{3}\right)$;
therefore, we consider only $M>1$. If $a<0$, there are no solutions
if $M<\left(1-2a\right)$; if $M=\left(1-2a\right)$, the only solution
is $\left(M,a,c\right)=\left(\left(1-2a\right),a,0\right)$; if $M>1-2a$,
the solutions are $\left(M,a,c\right)=\left(M,\left(1-a\right),\left(\triangle_{M+a-1}^{2}-\triangle_{\left(-a\right)}^{2}\right)\right)$,
i.e. a shift of the first term from a negative value $a$ to a positive
value $\left(1-a\right)$ with a reduction of the number of terms
$\left(M+2a-1\right)$. If $a=0$ or $1$, the classical solutions
(\ref{eq:1-2}) are $\left(M,a,c\right)=\left(M,0,\triangle_{M-1}^{2}\right)$
or $\left(M,1,\triangle_{M}^{2}\right)$. Therefore, we limit our
search to solutions for $M>1$ and $a>1$.

\section{Step 1: A General theorem}
\begin{thm}
For $\forall\delta\in\mathbb{Z}^{+}$, $\exists\sigma,a,M,c\in\mathbb{Z}^{+}$,
$\kappa\in\mathbb{Q}^{+}$, such as $\delta<\sigma$, $\gcd\left(\delta,\sigma\right)=1$,
$\kappa=\left(\sigma/\delta\right)>1$, 
\begin{equation}
\sum_{i=0}^{M-1}\left(a+i\right)^{3}=c^{2}\label{eq:4-1}
\end{equation}

\noindent holds if
\begin{eqnarray}
a & = & \frac{M\left(\kappa^{2}-1\right)+1+\sqrt{M^{2}\left(\kappa^{4}-1\right)+1}}{2}\label{eq:2-2}\\
c & = & \frac{\kappa M}{2}\left(\kappa^{2}M+\sqrt{M^{2}\left(\kappa^{4}-1\right)+1}\right)\label{eq:3-2}
\end{eqnarray}
\end{thm}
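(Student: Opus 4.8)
The plan is to verify the stated identity directly, by reducing the cube sum to a difference of squares of triangular numbers and then imposing the condition that its factorization be a perfect square. First I would use the classical identity (\ref{eq:1-2}) to write the left-hand side of (\ref{eq:4-1}) as a telescoped difference $\sum_{i=0}^{M-1}(a+i)^3 = \triangle_{a+M-1}^2 - \triangle_{a-1}^2$, which factors as a difference of squares into $\Delta\Sigma$, where $\Delta = \triangle_{a+M-1} - \triangle_{a-1}$ and $\Sigma = \triangle_{a+M-1} + \triangle_{a-1}$. A short computation with $\triangle_n = n(n+1)/2$ gives the explicit polynomial forms $\Delta = M(2a+M-1)/2$ and $\Sigma = a^2 + a(M-1) + M(M-1)/2$.

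The key idea is that the product $\Delta\Sigma$ is forced to be a perfect square precisely when the ratio $\Sigma/\Delta$ is the square of a rational, which is exactly the role of $\kappa = \sigma/\delta$: I would impose $\Sigma = \kappa^2\Delta$, so that $\Delta\Sigma = \kappa^2\Delta^2 = (\kappa\Delta)^2$ and hence $c = \kappa\Delta$. Substituting the explicit forms, the relation $\Sigma = \kappa^2\Delta$ becomes a quadratic equation in $a$, namely $a^2 - [M(\kappa^2-1)+1]a + \frac{M(M-1)}{2}(1-\kappa^2) = 0$. Solving this quadratic and retaining the larger root (to guarantee $a>1$) should reproduce formula (\ref{eq:2-2}).

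The crux of the argument, and the step I expect to absorb most of the work, is the simplification of the discriminant of this quadratic. Writing $B = M(\kappa^2-1)+1$, I would need to show that $B^2 - 2M(M-1)(1-\kappa^2)$ collapses to $M^2(\kappa^4-1)+1$; the cross terms linear in $(\kappa^2-1)$ must cancel against the $2M(M-1)(\kappa^2-1)$ contribution, leaving only $M^2(\kappa^2-1)(\kappa^2+1)+1$. Once this identity is confirmed, the root formula (\ref{eq:2-2}) follows immediately.

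Finally, I would recover (\ref{eq:3-2}) from $c = \kappa\Delta$ by observing that $2a+M-1 = \kappa^2 M + \sqrt{M^2(\kappa^4-1)+1}$ once (\ref{eq:2-2}) is substituted, so that $c = \kappa M(2a+M-1)/2$ takes the stated form. The remaining constraints, $\delta<\sigma$, $\gcd(\delta,\sigma)=1$, $\kappa>1$, and integrality of $a,c$, I would address through the decomposition $\Delta = g\delta^2$, $\Sigma = g\sigma^2$, $c = g\delta\sigma$; ensuring that these quantities are genuine integers for a suitable choice of $\sigma$ is the only place where care beyond routine algebra is needed, and it is precisely what links the existence claim to the Pell-equation analysis developed later in the paper.
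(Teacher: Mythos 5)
Your proposal is correct and takes essentially the same approach as the paper: both reduce the sum to the difference of squares of triangular numbers, factor it as $\Delta\Sigma$, and impose $\Sigma/\Delta=\kappa^{2}$ (the paper via $\Delta=g\delta^{2}$, $\Sigma=g\sigma^{2}$, $c=g\delta\sigma$, which is the same as your $c=\kappa\Delta$). The only variation is cosmetic: the paper solves the resulting quadratic for $g$ first (its (\ref{eq:14-1})) and back-substitutes, whereas you eliminate $g$ and solve the quadratic in $a$ directly; your discriminant simplification $\left(M\left(\kappa^{2}-1\right)+1\right)^{2}+2M\left(M-1\right)\left(\kappa^{2}-1\right)=M^{2}\left(\kappa^{4}-1\right)+1$ is valid and yields (\ref{eq:2-2}) and (\ref{eq:3-2}) exactly as stated.
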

\begin{proof}
\noindent For $a,M,c,\Delta,\varSigma,g,\Delta^{\prime},\varSigma^{\prime},\delta,\sigma,C\in\mathbb{Z}^{+}$,
$i,k\in\mathbb{Z}^{*}$, $\kappa\in\mathbb{\mathbb{Q}}^{+}$, with
$M>1$, $0\leq i\leq M-1$, $\kappa=\left(\sigma/\delta\right)>1$,
the sum of cubes of $M$ consecutive integers $\left(a+i\right)$
for $i=0$ to $M-1$ can be written successively as
\begin{eqnarray}
\sum_{i=0}^{M-1}\left(a+i\right)^{3} & = & \sum_{i=0}^{a+M-1}i^{3}-\sum_{i=0}^{a-1}i^{3}\label{eq:1-1}\\
 & = & \triangle_{a+M-1}^{2}-\triangle_{a-1}^{2}\label{eq:2-1}\\
 & = & \left(\triangle_{a+M-1}-\triangle_{a-1}\right)\left(\triangle_{a+M-1}+\triangle_{a-1}\right)\\
 & = & \left(\Delta\right)\left(\Sigma\right)\label{eq:3-1}
\end{eqnarray}
where $\Delta=\triangle_{a+M-1}-\triangle_{a-1}$ and $\Sigma=\triangle_{a+M-1}+\triangle_{a-1}$,
i.e. the difference and the sum of the triangular numbers of $\left(a+M-1\right)$
and $\left(a-1\right)$, with obviously $\Delta<\Sigma$, that can
also be written
\begin{eqnarray}
\Delta & = & M\left(a+\frac{M-1}{2}\right)\label{eq:5-3}\\
\Sigma & = & a^{2}+a\left(M-1\right)+\frac{M\left(M-1\right)}{2}\label{eq:7-2}
\end{eqnarray}
Let $g=\gcd\left(\Delta,\Sigma\right)$, yielding $\Delta=g\Delta^{\prime}$
and $\Sigma=g\Sigma^{\prime}$. For (\ref{eq:4-1}) to hold, $c^{2}=g^{2}\Delta^{\prime}\Sigma^{\prime}$
and as $\Delta^{\prime}$ and $\Sigma^{\prime}$ are coprimes and
their product must be square, both must be integer squares, i.e. $\Delta^{\prime}=\delta^{2}$
and $\Sigma^{\prime}=\sigma^{2}$, with $\gcd\left(\delta,\sigma\right)=1$
and $\delta<\sigma$, yielding 
\begin{equation}
c=g\delta\sigma\label{eq:12-3}
\end{equation}

\noindent From (\ref{eq:5-3}) and (\ref{eq:7-2}), one has then respectively
\begin{eqnarray}
2a+M-1 & = & \frac{2g\delta^{2}}{M}\label{eq:12-1}\\
 & = & \sqrt{4g\sigma^{2}-\left(M^{2}-1\right)}\label{eq:13-2}
\end{eqnarray}

\noindent where the $+$ sign is taken in front of the square root
in (\ref{eq:13-2}) as $2a+M>1$. Solving for $g$ yields then
\begin{equation}
g=\frac{M}{2\delta^{2}}\left(\kappa^{2}M+\sqrt{M^{2}\left(\kappa^{4}-1\right)+1}\right)\label{eq:14-1}
\end{equation}

\noindent Replacing in (\ref{eq:12-1}) or (\ref{eq:13-2}) yields
then directly (\ref{eq:2-2}) and in (\ref{eq:12-3}) yields directly
(\ref{eq:3-2}). 
\end{proof}

\section{Step 2: Three Particular Solutions and a Generalization}

\noindent For which values of $M$ does (\ref{eq:4-1}) hold ? Answers
can be found for at least three particular cases. Other general approaches
are developed further.

\noindent Table 1 gives for $1<M\leq45$ and $1<a<10^{5}$, the first
values of $M,a$ and associated $\Delta,\Sigma,g,\delta,\sigma$ and
$c$ values. For $M$ and $a<10^{5}$, there are 892 solutions $\left(M,a,c\right)$,
given in \cite{key-5-5,key-6-0}, such that (\ref{eq:1}) holds.
\begin{table}
\protect\caption{First values of $M,a,\Delta,\Sigma,g,\delta,\sigma$ and $c$ for
$1<M\leq45$ and $1<a<10^{5}$ }

\centering{}%
\begin{tabular}{|c|c|c|c|c|c|c|c|}
\hline 
$M$ & $a$ & $\Delta$ & $\Sigma$ & $g$ & $\delta$ & $\sigma$ & $c$\tabularnewline
\hline 
\hline 
\textbf{\small{}3} & \textbf{\small{}23} & \textbf{\small{}72} & \textbf{\small{}578} & \textbf{\small{}2} & \textbf{\small{}6} & \textbf{\small{}17} & \textbf{\small{}204}\tabularnewline
\hline 
{\small{}5} & {\small{}25} & {\small{}135} & {\small{}735} & {\small{}15} & {\small{}3} & {\small{}7} & {\small{}315}\tabularnewline
\hline 
{\small{}5} & {\small{}96} & {\small{}490} & {\small{}9610} & {\small{}10} & {\small{}7} & {\small{}31} & {\small{}2170}\tabularnewline
\hline 
\textbf{\small{}5} & \textbf{\small{}118} & \textbf{\small{}600} & \textbf{\small{}14406} & \textbf{\small{}6} & \textbf{\small{}10} & \textbf{\small{}49} & \textbf{\small{}2940}\tabularnewline
\hline 
\textbf{\small{}7} & \textbf{\small{}333} & \textbf{\small{}2352} & \textbf{\small{}112908} & \textbf{\small{}12} & \textbf{\small{}14} & \textbf{\small{}97} & \textbf{\small{}16296}\tabularnewline
\hline 
\emph{\small{}8} & \emph{\small{}28} & \emph{\small{}252} & \emph{\small{}1008} & \emph{\small{}252} & \emph{\small{}1} & \emph{\small{}2} & \emph{\small{}504}\tabularnewline
\hline 
\textbf{\small{}9} & \textbf{\small{}716} & \textbf{\small{}6480} & \textbf{\small{}518420} & \textbf{\small{}20} & \textbf{\small{}18} & \textbf{\small{}161} & \textbf{\small{}57960}\tabularnewline
\hline 
\textbf{\small{}11} & \textbf{\small{}1315} & \textbf{\small{}14520} & \textbf{\small{}1742430} & \textbf{\small{}30} & \textbf{\small{}22} & \textbf{\small{}241} & \textbf{\small{}159060}\tabularnewline
\hline 
{\small{}12} & {\small{}14} & {\small{}234} & {\small{}416} & {\small{}26} & {\small{}3} & {\small{}4} & {\small{}312}\tabularnewline
\hline 
{\small{}13} & {\small{}144} & {\small{}1950} & {\small{}22542} & {\small{}78} & {\small{}5} & {\small{}17} & {\small{}6630}\tabularnewline
\hline 
\textbf{\small{}13} & \textbf{\small{}2178} & \textbf{\small{}28392} & \textbf{\small{}4769898} & \textbf{\small{}42} & \textbf{\small{}26} & \textbf{\small{}337} & \textbf{\small{}368004}\tabularnewline
\hline 
{\small{}15} & {\small{}25} & {\small{}480} & {\small{}1080} & {\small{}120} & {\small{}2} & {\small{}3} & {\small{}720}\tabularnewline
\hline 
\textbf{\small{}15} & \textbf{\small{}3353} & \textbf{\small{}50400} & \textbf{\small{}11289656} & \textbf{\small{}56} & \textbf{\small{}30} & \textbf{\small{}449} & \textbf{\small{}754320}\tabularnewline
\hline 
{\small{}15} & {\small{}57960} & {\small{}869505} & {\small{}3360173145} & {\small{}105} & {\small{}91} & {\small{}5657} & {\small{}54052635}\tabularnewline
\hline 
{\small{}17} & {\small{}9} & {\small{}289} & {\small{}361} & {\small{}1} & {\small{}17} & {\small{}19} & {\small{}323}\tabularnewline
\hline 
{\small{}17} & {\small{}120} & {\small{}2176} & {\small{}16456} & {\small{}136} & {\small{}4} & {\small{}11} & {\small{}5984}\tabularnewline
\hline 
\textbf{\small{}17} & \textbf{\small{}4888} & \textbf{\small{}83232} & \textbf{\small{}23970888} & \textbf{\small{}72} & \textbf{\small{}34} & \textbf{\small{}577} & \textbf{\small{}1412496}\tabularnewline
\hline 
\emph{\small{}18} & \emph{\small{}153} & \emph{\small{}2907} & \emph{\small{}26163} & \emph{\small{}2907} & \emph{\small{}1} & \emph{\small{}3} & \emph{\small{}8721}\tabularnewline
\hline 
{\small{}18} & {\small{}680} & {\small{}12393} & {\small{}474113} & {\small{}17} & {\small{}27} & {\small{}167} & {\small{}76653}\tabularnewline
\hline 
\textbf{\small{}19} & \textbf{\small{}6831} & \textbf{\small{}129960} & \textbf{\small{}46785690} & \textbf{\small{}90} & \textbf{\small{}38} & \textbf{\small{}721} & \textbf{\small{}2465820}\tabularnewline
\hline 
{\small{}21} & {\small{}14} & {\small{}504} & {\small{}686} & {\small{}14} & {\small{}6} & {\small{}7} & {\small{}588}\tabularnewline
\hline 
{\small{}21} & {\small{}144} & {\small{}3234} & {\small{}23826} & {\small{}66} & {\small{}7} & {\small{}19} & {\small{}8778}\tabularnewline
\hline 
\textbf{\small{}21} & \textbf{\small{}9230} & \textbf{\small{}194040} & \textbf{\small{}85377710} & \textbf{\small{}110} & \textbf{\small{}42} & \textbf{\small{}881} & \textbf{\small{}4070220}\tabularnewline
\hline 
\textbf{\small{}23} & \textbf{\small{}12133} & \textbf{\small{}279312} & \textbf{\small{}147476868} & \textbf{\small{}132} & \textbf{\small{}46} & \textbf{\small{}1057} & \textbf{\small{}6418104}\tabularnewline
\hline 
\textbf{\small{}25} & \textbf{\small{}15588} & \textbf{\small{}390000} & \textbf{\small{}243360156} & \textbf{\small{}156} & \textbf{\small{}50} & \textbf{\small{}1249} & \textbf{\small{}9742200}\tabularnewline
\hline 
\textbf{\small{}27} & \textbf{\small{}19643} & \textbf{\small{}530712} & \textbf{\small{}386358518} & \textbf{\small{}182} & \textbf{\small{}54} & \textbf{\small{}1457} & \textbf{\small{}14319396}\tabularnewline
\hline 
{\small{}28} & {\small{}81} & {\small{}2646} & {\small{}9126} & {\small{}54} & {\small{}7} & {\small{}13} & {\small{}4914}\tabularnewline
\hline 
\textbf{\small{}29} & \textbf{\small{}24346} & \textbf{\small{}706440} & \textbf{\small{}593409810} & \textbf{\small{}210} & \textbf{\small{}58} & \textbf{\small{}1681} & \textbf{\small{}20474580}\tabularnewline
\hline 
\textbf{\small{}31} & \textbf{\small{}29745} & \textbf{\small{}922560} & \textbf{\small{}885657840} & \textbf{\small{}240} & \textbf{\small{}62} & \textbf{\small{}1921} & \textbf{\small{}28584480}\tabularnewline
\hline 
{\small{}32} & {\small{}69} & {\small{}2704} & {\small{}7396} & {\small{}4} & {\small{}26} & {\small{}43} & {\small{}4472}\tabularnewline
\hline 
{\small{}32} & {\small{}133} & {\small{}4752} & {\small{}22308} & {\small{}132} & {\small{}6} & {\small{}13} & {\small{}10296}\tabularnewline
\hline 
\emph{\small{}32} & \emph{\small{}496} & \emph{\small{}16368} & \emph{\small{}261888} & \emph{\small{}16368} & \emph{\small{}1} & \emph{\small{}4} & \emph{\small{}65472}\tabularnewline
\hline 
{\small{}33} & {\small{}33} & {\small{}1617} & {\small{}2673} & {\small{}33} & {\small{}7} & {\small{}9} & {\small{}2079}\tabularnewline
\hline 
\textbf{\small{}33} & \textbf{\small{}35888} & \textbf{\small{}1184832} & \textbf{\small{}1289097488} & \textbf{\small{}272} & \textbf{\small{}66} & \textbf{\small{}2177} & \textbf{\small{}39081504}\tabularnewline
\hline 
{\small{}35} & {\small{}225} & {\small{}8470} & {\small{}58870} & {\small{}70} & {\small{}11} & {\small{}29} & {\small{}22330}\tabularnewline
\hline 
\textbf{\small{}35} & \textbf{\small{}42823} & \textbf{\small{}1499400} & \textbf{\small{}1835265906} & \textbf{\small{}306} & \textbf{\small{}70} & \textbf{\small{}2449} & \textbf{\small{}52457580}\tabularnewline
\hline 
\textbf{\small{}37} & \textbf{\small{}50598} & \textbf{\small{}1872792} & \textbf{\small{}2561979798} & \textbf{\small{}342} & \textbf{\small{}74} & \textbf{\small{}2737} & \textbf{\small{}69267996}\tabularnewline
\hline 
{\small{}39} & {\small{}111} & {\small{}5070} & {\small{}17280} & {\small{}30} & {\small{}13} & {\small{}24} & {\small{}9360}\tabularnewline
\hline 
\textbf{\small{}39} & \textbf{\small{}59261} & \textbf{\small{}2311920} & \textbf{\small{}3514118780} & \textbf{\small{}380} & \textbf{\small{}78} & \textbf{\small{}3041} & \textbf{\small{}90135240}\tabularnewline
\hline 
{\small{}40} & {\small{}3276} & {\small{}131820} & {\small{}10860720} & {\small{}780} & {\small{}13} & {\small{}118} & {\small{}1196520}\tabularnewline
\hline 
\textbf{\small{}41} & \textbf{\small{}68860} & \textbf{\small{}2824080} & \textbf{\small{}4744454820} & \textbf{\small{}420} & \textbf{\small{}82} & \textbf{\small{}3361} & \textbf{\small{}115752840}\tabularnewline
\hline 
{\small{}42} & {\small{}64} & {\small{}3549} & {\small{}7581} & {\small{}21} & {\small{}13} & {\small{}19} & {\small{}5187}\tabularnewline
\hline 
\textbf{\small{}43} & \textbf{\small{}79443} & \textbf{\small{}3416952} & \textbf{\small{}6314527758} & \textbf{\small{}462} & \textbf{\small{}86} & \textbf{\small{}3697} & \textbf{\small{}146889204}\tabularnewline
\hline 
{\small{}45} & {\small{}176} & {\small{}8910} & {\small{}39710} & {\small{}110} & {\small{}9} & {\small{}19} & {\small{}18810}\tabularnewline
\hline 
\textbf{\small{}45} & \textbf{\small{}91058} & \textbf{\small{}4098600} & \textbf{\small{}8295566906} & \textbf{\small{}506} & \textbf{\small{}90} & \textbf{\small{}4049} & \textbf{\small{}184391460}\tabularnewline
\hline 
\end{tabular}
\end{table}
 One observes very easily that:

\noindent (i) all odd values of $M$ have at least one entry (in bold
in Table 1) with $g=2\triangle_{\left(M-1\right)/2}=\left(M^{2}-1\right)/4$,
$\delta=2M$ and $\sigma=\left(2M^{2}-1\right)$, yielding $c=M\left(M^{2}-1\right)\left(2M^{2}-1\right)/2$
with $a=M^{3}-\left(3M-1\right)/2$;

\noindent (ii) those odd values of $a$ equal to odd integer squares
have at least one entry (e.g. for $M=17$ in Table 1) with $g=\left(a-1\right)/8$,
$\delta=\left(2a-1\right)$ and $\sigma=\left(2a+1\right)$, yielding
$c=\left(a-1\right)\left(4a^{2}-1\right)/8$ with $M=\left(\sqrt{a}-1\right)\left(2a-1\right)/2$

\noindent (iii) those even values of $M$ equal to twice an integer
square have at least one entry (in italics in Table 1) with $g=M\left(M^{2}-1\right)/2$,
$\delta=1$, $\sigma=\sqrt{M/2}$, yielding $c=M\left(M^{2}-1\right)\sqrt{M/2}/2$
with $a=\triangle_{M-1}$.

\noindent These three cases can be generalized respectively to all
odd values of $M$, to all odd integer square values of $a=\left(2k+1\right)^{2}$,
and to all even values of $M$ equal to twice integer squares in the
following 
\begin{thm}
\begin{flushleft}
$\forall k\in\mathbb{Z}^{+}$, $\exists\delta,\sigma,M,a,c\in\mathbb{Z}^{+}$
such that (\ref{eq:4-1}) holds :
\par\end{flushleft}

\noindent (i) if $\sigma=\left(\delta^{2}-2\right)/2$, with
\begin{eqnarray}
M & = & \left(2k+1\right)\label{eq:5-2}\\
a & = & \left(2k+1\right)^{3}-\left(3k+1\right)\label{eq:5-4}\\
 & = & M^{3}-\frac{\left(3M-1\right)}{2}\label{eq:5-5}\\
c & = & 2k\left(k+1\right)\left(2k+1\right)\left(8k\left(k+1\right)+1\right)\label{eq:6-1}\\
 & = & \frac{M\left(M^{2}-1\right)\left(2M^{2}-1\right)}{2}\label{eq:6-3}
\end{eqnarray}

\noindent (ii) if $\sigma=\delta+2$, with
\begin{eqnarray}
a & = & \left(2k+1\right)^{2}\label{eq:5-2-1}\\
M & = & k\left(8k\left(k+1\right)+1\right)\label{eq:6-4}\\
 & = & \frac{\left(\sqrt{a}-1\right)\left(2a-1\right)}{2}\label{eq:6-5}\\
c & = & \frac{k\left(k+1\right)}{2}\left(4\left(2k+1\right)^{4}-1\right)\label{eq:6-1-1}\\
 & = & \frac{\left(a-1\right)\left(4a^{2}-1\right)}{8}\label{eq:6-6}
\end{eqnarray}

\noindent (iii) if $\delta=1$ and $\sigma=k$ with $k>1$ and
\begin{eqnarray}
M & = & 2k^{2}\label{eq:5-2-2}\\
a & = & k^{2}\left(2k^{2}-1\right)\label{eq:6-7}\\
 & = & \frac{M\left(M-1\right)}{2}\label{eq:6-8}\\
c & = & k^{3}\left(4k^{4}-1\right)\label{eq:6-1-2}\\
 & = & \frac{M\left(M^{2}-1\right)}{2}\,\sqrt{\frac{M}{2}}\label{eq:6-9}
\end{eqnarray}
\end{thm}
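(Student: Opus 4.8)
The plan is to realize each of the three families as an instance of Theorem~1. Guided by observations (i)--(iii), I would take, as functions of $k$, the explicit data $(\delta,\sigma,g,M,a)=\bigl(2(2k+1),\,2(2k+1)^{2}-1,\,k(k+1),\,2k+1,\,(2k+1)^{3}-(3k+1)\bigr)$ in case~(i); $(\delta,\sigma,g,M)=\bigl(2a-1,\,2a+1,\,\tfrac{k(k+1)}{2},\,k(2a-1)\bigr)$ with $a=(2k+1)^{2}$ in case~(ii); and $(\delta,\sigma,g,M,a)=\bigl(1,\,k,\,k^{2}(4k^{4}-1),\,2k^{2},\,k^{2}(2k^{2}-1)\bigr)$ in case~(iii). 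In each case the stated relation between $\delta$ and $\sigma$ is immediate; I would first record $\delta<\sigma$ and $\gcd(\delta,\sigma)=1$, the latter being trivial in cases~(ii) (consecutive odd numbers) and~(iii) ($\delta=1$), while in case~(i) one notes $\sigma=(2k+1)\delta-1$, so that $\sigma\equiv-1\pmod{\delta}$.

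Then I would verify directly the two factorizations underlying Theorem~1, namely $\Delta=g\delta^{2}$ and $\Sigma=g\sigma^{2}$, with $\Delta=M\!\left(a+\tfrac{M-1}{2}\right)$ and $\Sigma=a^{2}+a(M-1)+\tfrac{M(M-1)}{2}$ taken from (\ref{eq:5-3}) and (\ref{eq:7-2}). Substituting the data above, each side becomes an explicit algebraic expression; for instance in case~(iii) both sides of $\Sigma=g\sigma^{2}$ equal $4k^{8}-k^{4}$, and in case~(i) one has $\Delta=M^{2}(M^{2}-1)=g\delta^{2}$ after using $2a+M-1=2M(M^{2}-1)$. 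Once both factorizations hold, $c:=g\delta\sigma$ satisfies $c^{2}=(g\delta^{2})(g\sigma^{2})=\Delta\Sigma$, which by identity (\ref{eq:2-1}) is exactly $\sum_{i=0}^{M-1}(a+i)^{3}$, establishing (\ref{eq:4-1}); coprimality of $\delta$ and $\sigma$ is needed only to match the normalization $g=\gcd(\Delta,\Sigma)$ of Theorem~1. Equivalently, one may check that these data make the radicand $M^{2}(\kappa^{4}-1)+1$ in (\ref{eq:2-2})--(\ref{eq:3-2}) a perfect square and recover the same $a$ and $c$.

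It then remains to confirm integrality and positivity of the quantities, which is where the parity and square hypotheses enter. In case~(i), $a=M^{3}-\tfrac{3M-1}{2}$ is an integer precisely because $M=2k+1$ is odd, and $g=(M^{2}-1)/4=k(k+1)$. In case~(ii), $g=(a-1)/8=\tfrac{k(k+1)}{2}$ is an integer because $a-1=4k(k+1)$ with $k(k+1)$ even, which is exactly why $a$ must be an odd square. In case~(iii), $M=2k^{2}$ is twice a square and $g=k^{2}(4k^{4}-1)$. In each case $c=g\delta\sigma$ reproduces the stated value---for example $c=2k(k+1)(2k+1)\bigl(8k(k+1)+1\bigr)$ in case~(i) and $c=k^{3}(4k^{4}-1)$ in case~(iii)---and the second forms of $a$, $M$ and $c$, such as (\ref{eq:5-5}), (\ref{eq:6-5}), (\ref{eq:6-8}) and (\ref{eq:6-6}), are routine rewritings of the first.

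I expect case~(ii) to be the main obstacle. Unlike cases~(i) and~(iii), where the data are given directly in terms of the free parameter $k$, here $\delta=2a-1$ and $\sigma=2a+1$ are written in terms of the unknown starting value $a$. The first factorization $\Delta=g\delta^{2}$ then holds for any $a$ (with $M=k(2a-1)$) and merely fixes $g=\tfrac{k(k+1)}{2}$; it is the second factorization $\Sigma=g\sigma^{2}$---equivalently, the perfect-square property of the radicand---that is binding and forces $a=(2k+1)^{2}$, after which $M$ is pinned down as in (\ref{eq:6-5}). Making this forcing precise, rather than merely verifying consistency, is the delicate point; the remaining work---the gcd computations and the polynomial identities in cases~(i) and~(iii)---is routine.
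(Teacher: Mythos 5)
Your proposal is correct, and it takes a genuinely different route from the paper's proof. The paper is derivational: for each case it substitutes the hypothesis on $\sigma$ into the closed form (\ref{eq:14-1}) for $g$ obtained in Theorem~1, chooses the parameters so that the polynomial under the square root becomes a perfect square ($\delta=2M$ in case (i); $M=q\delta$ with $\delta=8q(q+1)+1$ in case (ii); $M=2k^{2}$ in case (iii)), lets the integrality of $g$ force $M$ odd in (i) and $q=k\in\mathbb{Z}^{+}$ in (ii), and only then reads off $a$ and $c$ from (\ref{eq:2-2})--(\ref{eq:3-2}) or from (\ref{eq:12-1}) and (\ref{eq:12-3}). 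You instead posit the complete data $(\delta,\sigma,g,M,a)$ as explicit polynomials in $k$ and verify the two factorization identities $\Delta=g\delta^{2}$ and $\Sigma=g\sigma^{2}$, with $\Delta,\Sigma$ taken from (\ref{eq:5-3}) and (\ref{eq:7-2}), whence $c:=g\delta\sigma$ satisfies $c^{2}=\Delta\Sigma=\sum_{i=0}^{M-1}(a+i)^{3}$ by (\ref{eq:2-1}). Since the theorem is an existence statement, this verification is logically sufficient, and it is more elementary than the paper's argument because it avoids all manipulation of radicals; what the paper's derivation buys is motivation---it shows how the three parametrizations are found rather than merely checked. Your identities do hold (e.g.\ in case (i), $2a+M-1=2M(M^{2}-1)$ gives $\Delta=M^{2}(M^{2}-1)=g\delta^{2}$, and $\Sigma=\tfrac{1}{4}(M^{2}-1)(2M^{2}-1)^{2}=g\sigma^{2}$ follows from $a+M-1=\tfrac{1}{2}(M-1)(2M^{2}+2M+1)$), and your remark that $\gcd(\delta,\sigma)=1$ is needed only to match the normalization $g=\gcd(\Delta,\Sigma)$ of Theorem~1 is exactly right. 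Your one misjudgment is tactical, not logical: case (ii) is not delicate. For the existence claim nothing needs to be ``forced''---verifying $\Sigma=g\sigma^{2}$ at $a=(2k+1)^{2}$ suffices---and even the forcing you anticipated as the main obstacle is a one-line computation: with $\delta=2a-1$, $M=k\delta$, $g=k(k+1)/2$ and $\sigma=2a+1$, the equation $\Sigma=g\sigma^{2}$ reduces to $(2k^{2}+2k+1)\,a(a-1)=2k(k+1)\,a(a+1)$, i.e.\ $a^{2}=(2k+1)^{2}a$, so $a=(2k+1)^{2}$ is pinned down immediately.
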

\begin{proof}
For $k,\delta,\sigma,g,a,M>1,c\in\mathbb{Z}^{+}$, $q\in\mathbb{Q}^{+}$,
for (\ref{eq:4-1}) to hold:

\noindent (i) let $\sigma=\left(\delta^{2}-2\right)/2$. Replacing
in (\ref{eq:14-1}) yields
\begin{equation}
g=\frac{M^{2}}{8\delta^{4}}\left(\delta^{4}-4\delta^{2}+4+\sqrt{\delta^{8}-8\delta^{6}+8\delta^{4}+16\delta^{2}\left(\frac{\delta^{2}}{M^{2}}-2\right)+16}\right)\label{eq:30-2}
\end{equation}

\noindent For the polynomial in $\delta$ under the square root sign
to be a square, let $\left(\delta/M\right)^{2}-2=2$, i.e. $\delta=2M$,
yielding $\delta^{8}-8\delta^{6}+8\delta^{4}+32\delta^{2}+16=\left(\delta^{4}-4\delta^{2}-4\right)^{2}$,
giving $g=M^{2}\left(\delta^{2}-4\right)/4\delta^{2}=\left(M^{2}-1\right)/4$
where $\delta$ was replaced by $2M$. As $g\in\mathbb{Z}^{+}$, $M$
cannot be even and must be odd, i.e. $M=2k+1$ $\forall k\in\mathbb{Z}^{+}$,
yielding $g=k\left(k+1\right)$, $\delta=2M=2\left(2k+1\right)$,
$\sigma=2M^{2}-1=8k\left(k+1\right)+1$. Replacing in (\ref{eq:2-2})
and (\ref{eq:3-2}) yields directly (\ref{eq:5-4}) to (\ref{eq:6-3}). 

\noindent (ii) Let $\sigma=\delta+2$. Replacing in (\ref{eq:14-1})
yields
\begin{equation}
g=\frac{M}{2\delta^{4}}\left(M\left(\delta+2\right)^{2}+\sqrt{M^{2}\left(\left(\delta+2\right)^{4}-\delta^{4}\right)+\delta^{4}}\right)\label{eq:31-2}
\end{equation}

\noindent For the polynomial in $\delta$ under the square root sign
to be a square, let $M=q\delta$ and $\delta=8q(q+1)+1$ with $q\in\mathbb{Q}^{+}$.
It yields then $\sqrt{q^{2}\left(\left(\delta+2\right)^{4}-\delta^{4}\right)+\delta^{2}}=\delta^{2}-4q\left(\delta+1\right)$,
giving $g=q\left(q+1\right)/2$. As $g\in\mathbb{Z}^{+}$, $q\in\mathbb{Z}^{+}$
and let $q=k$, yielding $g=\triangle_{k}$, $\delta=16\triangle_{k}+1$,
$\sigma=16\triangle_{k}+3$, $M=k\delta=k\left(16\triangle_{k}+1\right)=$(\ref{eq:6-4}).
Replacing in (\ref{eq:12-1}) and (\ref{eq:12-3}) yields (\ref{eq:5-2-1})
and (\ref{eq:6-1-1}). Replacing $k$ in function of $a$ from (\ref{eq:5-2-1})
yields also (\ref{eq:6-5}) and (\ref{eq:6-6}).

\noindent (iii) Let $\delta=1$ and $\sigma=k$ with $k>1$. Then
(\ref{eq:14-1}) reads 
\begin{equation}
g=\frac{M\left(k^{2}M+\sqrt{M^{2}\left(k^{4}-1\right)+1}\right)}{2}\label{eq:31}
\end{equation}
which takes integer values if $M=2k^{2}$, yielding $g=k^{2}\left(4k^{4}-1\right)=M\left(M^{2}-1\right)/2$.
Replacing in (\ref{eq:12-1}) and (\ref{eq:12-3}) yields directly
(\ref{eq:6-7}) to (\ref{eq:6-9}).
\end{proof}
\noindent The case (i) of Theorem 2 confirms the statement of Stroeker
(\cite{key-1-44}, p. 297) about all odd values of $M$ (in bold in
Table 1) having a solution to (\ref{eq:4-1}). The first $50\,000$
values of $\left(M,a,c\right)$ for this case (i) are given in \cite{key-7-1,key-7-2}

\noindent For the case (ii) of Theorem 2, Table 2 gives the first
five values of $M,a,g,\delta,\sigma$ and $c$.
\begin{table}
\protect\caption{Values of $M,a,g,\delta,\sigma,c$ for $1\leq k\leq5$ for case (ii)
of Theorem 2}

\centering{}%
\begin{tabular}{|c|c|c|c|c|c|c|}
\hline 
$k$ & $M$ & $a$ & $g$ & $\delta$ & $\sigma$ & $c$\tabularnewline
\hline 
\hline 
1 & 17 & 9 & 1 & 17 & 19 & 323\tabularnewline
\hline 
2 & 98 & 25 & 3 & 49 & 51 & 7497\tabularnewline
\hline 
3 & 291 & 49 & 6 & 97 & 99 & 57618\tabularnewline
\hline 
4 & 644 & 81 & 10 & 161 & 163 & 262430\tabularnewline
\hline 
5 & 1205 & 121 & 15 & 241 & 241 & 878445\tabularnewline
\hline 
\end{tabular}
\end{table}
The first $50\,000$ values of $\left(M,a,c\right)$ for this case
(ii) are given in \cite{key-7-3,key-7-4}.

\noindent As there exist other triplets of values of $\left(M,a,c\right)$
such that (\ref{eq:4-1}) holds with $M=k\delta$ for the same value
of $g=\Delta_{k}$, the case (ii) of Theorem 2 can be generalized
to other values of $a$ as follows. Table 3 shows some of these values
for $1\leq k\leq2$ and $1<M_{n},a_{n}<10^{5}$ and indexed by $n$
for increasing values of $M_{n}$.
\begin{table}
\protect\caption{Values of $M_{n},a_{n},g,\delta_{n},\sigma_{n},c_{n}$ for $1\leq k\leq2$
and $1<M_{n},a_{n}<10^{5}$}

\centering{}%
\begin{tabular}{|c|c|c|c|c|c|c|c|}
\hline 
$k$ & $n$ & $M_{n}$ & $a_{n}$ & $g$ & $\delta_{n}$ & $\sigma_{n}$ & $c_{n}$\tabularnewline
\hline 
\hline 
1 & 1 & 17 & 9 & 1 & 17 & 19 & 323\tabularnewline
\hline 
1 & 2 & 305 & 153 & 1 & 305 & 341 & 104005\tabularnewline
\hline 
1 & 3 & 5473 & 2737 & 1 & 5473 & 6119 & 33489287\tabularnewline
\hline 
1 & 4 & 98209 & 49105 & 1 & 98209 & 109801 & 10783446409\tabularnewline
\hline 
\hline 
2 & 1 & 98 & 25 & 3 & 49 & 51 & 7497\tabularnewline
\hline 
2 & 2 & 4898 & 1225 & 3 & 2449 & 2549 & 18727503\tabularnewline
\hline 
\end{tabular}
\end{table}

\noindent It is easily seen from Table 3 that, for each value of $k$,
\begin{eqnarray}
\sigma_{n}-\delta_{n} & = & \sigma_{n-1}+\delta_{n-1}\label{eq:32-2}\\
\sigma_{n}+\delta_{n} & = & \left(2\left(2k+1\right)^{2}-1\right)\sigma_{n-1}+\left(2\left(2k+1\right)^{2}+1\right)\delta_{n-1}\label{eq:33-2}
\end{eqnarray}

\noindent with $\sigma_{0}=\delta_{0}=1$, yielding the recurrence
relations
\begin{eqnarray}
\delta_{n} & = & 2\left(2k+1\right)^{2}\delta_{n-1}-\delta_{n-2}\label{eq:34-1}\\
\sigma_{n} & = & 2\left(2k+1\right)^{2}\sigma_{n-1}-\sigma_{n-2}\label{eq:35-1}
\end{eqnarray}

\noindent i.e. $\delta_{n}$ and $\sigma_{n}$ fulfill the Diophantine
equation $\left(2k\left(k+1\right)+1\right)\delta_{n}^{2}-2k\left(k+1\right)\sigma_{n}^{2}=1$.
The general solutions of this Diophantine equation can be expressed
in function of Chebyshev polynomials of the second kind $U_{n}\left(\left(2k+1\right)^{2}\right)$
as 
\begin{eqnarray}
\delta_{n} & = & U_{n}\left(\left(2k+1\right)^{2}\right)-U_{n-1}\left(\left(2k+1\right)^{2}\right)\label{eq:34-2}\\
\sigma_{n} & = & U_{n}\left(\left(2k+1\right)^{2}\right)+U_{n-1}\left(\left(2k+1\right)^{2}\right)\label{eq:35-2}
\end{eqnarray}

\noindent These results are generalized for all $k$ and $n\in\mathbb{Z}^{+}$
in the next 
\begin{thm}
\begin{flushleft}
$\forall k,n\in\mathbb{Z}^{+}$, $\exists\delta_{n},\sigma_{n},M_{n},a_{n},c_{n}\in\mathbb{Z}^{+}$,
$M_{n}>1$, such that (\ref{eq:4-1}) holds if $\sigma_{n}=\sqrt{\delta_{n}^{2}+\left(\delta_{n}^{2}-1\right)/2k\left(k+1\right)}$,
with
\begin{eqnarray}
M_{n} & = & k\delta_{n}\label{eq:36-1}\\
 & = & k\left[U_{n}\left(\left(2k+1\right)^{2}\right)-U_{n-1}\left(\left(2k+1\right)^{2}\right)\right]\label{eq:36-2}\\
a_{n} & = & \frac{\delta_{n}+1}{2}\label{eq:37-1}\\
 & = & \frac{U_{n}\left(\left(2k+1\right)^{2}\right)-U_{n-1}\left(\left(2k+1\right)^{2}\right)+1}{2}\label{eq:37-2}\\
c_{n} & = & \delta_{n}\sqrt{\frac{k\left(k+1\right)\left(\left(2k\left(k+1\right)+1\right)\delta_{n}^{2}-1\right)}{8}}\label{eq:38-1}\\
 & = & \frac{k\left(k+1\right)U_{2n}\left(\left(2k+1\right)^{2}\right)}{2}\label{eq:38-2}
\end{eqnarray}

\par\end{flushleft}\end{thm}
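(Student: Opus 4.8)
The plan is to recognise the stated hypothesis $\sigma_{n}=\sqrt{\delta_{n}^{2}+\left(\delta_{n}^{2}-1\right)/2k\left(k+1\right)}$ as a mere rewriting of the generalized Pell equation $\left(2k\left(k+1\right)+1\right)\delta_{n}^{2}-2k\left(k+1\right)\sigma_{n}^{2}=1$ established just above (clearing the denominator turns one into the other), and then to build a genuine solution of (\ref{eq:4-1}) directly from $M_{n}=k\delta_{n}$ and $a_{n}=\left(\delta_{n}+1\right)/2$ by showing that the difference $\Delta$ and sum $\Sigma$ of (\ref{eq:5-3}) and (\ref{eq:7-2}) factor as $\Delta=\triangle_{k}\delta_{n}^{2}$ and $\Sigma=\triangle_{k}\sigma_{n}^{2}$. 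Once this factorisation is secured, $\Delta\Sigma=\triangle_{k}^{2}\delta_{n}^{2}\sigma_{n}^{2}$ is automatically a perfect square, so (\ref{eq:4-1}) holds with $c_{n}=\triangle_{k}\delta_{n}\sigma_{n}$ by (\ref{eq:12-3}), and the two advertised closed forms for $M_{n},a_{n},c_{n}$ follow by routine substitution.

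First I would dispose of the Pell equation for the sequences given by the Chebyshev closed forms (\ref{eq:34-2}) and (\ref{eq:35-2}), which are integer valued since $U_{n}$ has integer coefficients and $\left(2k+1\right)^{2}\in\mathbb{Z}$. Substituting $\delta_{n}=U_{n}-U_{n-1}$ and $\sigma_{n}=U_{n}+U_{n-1}$ (all evaluated at $x=\left(2k+1\right)^{2}$) into the left-hand side, the coefficient of $U_{n}^{2}+U_{n-1}^{2}$ collapses to $1$ while the mixed term $U_{n}U_{n-1}$ picks up the factor $-\left(8k\left(k+1\right)+2\right)=-2\left(2k+1\right)^{2}=-2x$, so that the whole expression becomes the classical identity $U_{n}\left(x\right)^{2}-2xU_{n}\left(x\right)U_{n-1}\left(x\right)+U_{n-1}\left(x\right)^{2}=1$. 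I would prove this by a one-line induction on the recurrence $U_{n+1}=2xU_{n}-U_{n-1}$: writing $P_{n}$ for the left-hand side, $P_{n}-P_{n-1}$ telescopes to $0$ once $U_{n}^{2}-U_{n-2}^{2}$ is factored and $U_{n}+U_{n-2}=2xU_{n-1}$ is inserted, with base value $P_{0}=1$. The Pell relation simultaneously delivers $\gcd\left(\delta_{n},\sigma_{n}\right)=1$, while $U_{n-1}>0$ for $n\ge1$ gives $\delta_{n}<\sigma_{n}$, exactly the coprimality and ordering demanded by Theorem 1.

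Next I would substitute $M=k\delta_{n}$ and $a=\left(\delta_{n}+1\right)/2$ into (\ref{eq:5-3}) and (\ref{eq:7-2}). The difference gives $\Delta=\frac{1}{2}M\left(2a+M-1\right)=\frac{1}{2}k\delta_{n}\cdot\left(k+1\right)\delta_{n}=\triangle_{k}\delta_{n}^{2}$ unconditionally, whereas the sum simplifies to $\Sigma=\left(\left(2k\left(k+1\right)+1\right)\delta_{n}^{2}-1\right)/4$, which equals $\triangle_{k}\sigma_{n}^{2}$ precisely when the Pell equation of the first step holds. Hence $g=\gcd\left(\Delta,\Sigma\right)=\triangle_{k}\in\mathbb{Z}^{+}$ (using $\gcd\left(\delta_{n},\sigma_{n}\right)=1$), so (\ref{eq:4-1}) holds and $c_{n}=g\delta_{n}\sigma_{n}=\triangle_{k}\delta_{n}\sigma_{n}$. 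For the two closed forms of $c_{n}$, the surd expression (\ref{eq:38-1}) follows by eliminating $\sigma_{n}$ through $\sigma_{n}^{2}=\left(\left(2k\left(k+1\right)+1\right)\delta_{n}^{2}-1\right)/2k\left(k+1\right)$ and absorbing the prefactor under the root, while the Chebyshev expression (\ref{eq:38-2}) comes from $\delta_{n}\sigma_{n}=U_{n}^{2}-U_{n-1}^{2}=U_{2n}\left(x\right)$, the $m=n$ case of $U_{m+n}=U_{m}U_{n}-U_{m-1}U_{n-1}$. Finally I would record that $a_{n}\in\mathbb{Z}^{+}$ needs $\delta_{n}$ odd, which holds by a parity induction on (\ref{eq:34-1}) since $\delta_{0}$ and $\delta_{1}$ are both odd and the recurrence gives $\delta_{n}\equiv\delta_{n-2}\pmod{2}$, and that $M_{n}=k\delta_{n}\ge k\left(2\left(2k+1\right)^{2}-1\right)>1$ for $n\ge1$.

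The only genuinely substantive step is the Pell-equation verification, and even this is routine once one spots the reduction $8k\left(k+1\right)+2=2\left(2k+1\right)^{2}$ and recognises the resulting Chebyshev identity $U_{n}^{2}-2xU_{n}U_{n-1}+U_{n-1}^{2}=1$. I therefore expect the real risk to be bookkeeping rather than depth: one must confirm that the $g$ produced here genuinely coincides with the $\gcd\left(\Delta,\Sigma\right)$ required by the Step 1 framework (this is where $\gcd\left(\delta_{n},\sigma_{n}\right)=1$ is essential), and one must reconcile the surd and $U_{2n}$ forms of $c_{n}$ so that they are seen to be identically equal and not merely numerically equal on the entries of Table 3.
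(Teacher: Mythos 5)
Your proof is correct, and it reaches the theorem by a genuinely more direct route than the paper. The paper argues top-down through the Theorem 1 machinery: it substitutes $\sigma_{n}=\sqrt{\delta_{n}^{2}+\left(\delta_{n}^{2}-1\right)/2k\left(k+1\right)}$ into the general radical formula (\ref{eq:14-1}) for $g$, chooses $M_{n}=k\delta_{n}$ so that the radicand becomes a square, asserts ``after simplification'' that $g=\triangle_{k}$, and then back-substitutes into (\ref{eq:2-2}) and (\ref{eq:12-3}); the identity $\delta_{n}\sigma_{n}=U_{n}^{2}-U_{n-1}^{2}=U_{2n}$ is obtained there by converting $U_{n}^{2}$ and $U_{n-1}^{2}$ into the first-kind polynomials $T_{2n+2}$ and $T_{2n}$. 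You bypass the radical entirely: putting $M=k\delta_{n}$ and $a=\left(\delta_{n}+1\right)/2$ into (\ref{eq:5-3}) and (\ref{eq:7-2}) gives $\Delta=\triangle_{k}\delta_{n}^{2}$ unconditionally and $\Sigma=\left(\left(2k\left(k+1\right)+1\right)\delta_{n}^{2}-1\right)/4$, which equals $\triangle_{k}\sigma_{n}^{2}$ precisely when the Pell relation holds, so $\Delta\Sigma$ is a manifest square and $c_{n}=\triangle_{k}\delta_{n}\sigma_{n}$ follows; your computations here check out. Your version also buys rigor at three points the paper glosses over: you actually prove that the Chebyshev forms (\ref{eq:34-2})--(\ref{eq:35-2}) satisfy the Pell equation, via the induction on $U_{n}^{2}-2xU_{n}U_{n-1}+U_{n-1}^{2}=1$ with $x=\left(2k+1\right)^{2}$ (the paper merely asserts these are the general solutions in the discussion before the theorem); you verify integrality of $a_{n}$ by the parity induction showing $\delta_{n}$ is always odd (left implicit in the paper); and you get $U_{2n}=U_{n}^{2}-U_{n-1}^{2}$ from the addition formula $U_{m+n}=U_{m}U_{n}-U_{m-1}U_{n-1}$ instead of the $T$-polynomial conversion. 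What the paper's route buys in exchange is motivation: working through (\ref{eq:14-1}) exhibits $M_{n}=k\delta_{n}$ as the choice forced by rationalizing the radical, whereas you take that ansatz from the statement, which is entirely legitimate for a verification proof. As your closing remark anticipates, neither argument shows the Chebyshev forms exhaust all solutions of the Diophantine equation, but the theorem as stated is existential, so this is not a gap.
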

\begin{proof}
For $k,n,\delta_{n},\sigma_{n},g,M_{n},a_{n},c_{n}\in\mathbb{Z}^{+}$,
$M_{n}>1$, let 

\noindent $\sigma_{n}=\sqrt{\delta_{n}^{2}+\left(\delta_{n}^{2}-1\right)/2k\left(k+1\right)}$.
Replacing in (\ref{eq:14-1}) yields
\begin{eqnarray}
g & = & \frac{M_{n}}{2\delta_{n}^{4}}\left(\left(\frac{\left(2k\left(k+1\right)+1\right)\delta_{n}^{2}-1}{2k\left(k+1\right)}\right)M_{n}+\right.\nonumber \\
 &  & \left.\sqrt{M_{n}^{2}\left(\left(\frac{\left(2k\left(k+1\right)+1\right)\delta_{n}^{2}-1}{2k\left(k+1\right)}\right)^{2}-\delta_{n}^{4}\right)+\delta_{n}^{4}}\right)\label{eq:39-1}
\end{eqnarray}

\noindent For the polynomial in $\delta_{n}$ under the square root
sign to be a square, let $M_{n}=k\delta_{n}$, yielding after simplification
$g=k\left(k+1\right)/2=\triangle_{k}$. Replacing further in (\ref{eq:2-2})
and (\ref{eq:12-3}) yields (\ref{eq:37-1}) and (\ref{eq:38-1}).
Replacing further $\delta_{n}$ by (\ref{eq:34-2}) yields (\ref{eq:36-2})
and (\ref{eq:37-2}). Replacing $\delta_{n}$ and $\sigma_{n}$ in
(\ref{eq:12-3}) yields (\ref{eq:38-2}), noting that $\delta_{n}\sigma_{n}=U_{n}^{2}\left(\left(2k+1\right)^{2}\right)-U_{n-1}^{2}\left(\left(2k+1\right)^{2}\right)=U_{2n}\left(\left(2k+1\right)^{2}\right)$
as can be shown by replacing $U_{n}^{2}\left(x\right)$ and $U_{n-1}^{2}\left(x\right)$
in function of Chebyshev polynomials of the first kind, respectively
$T_{2n+2}\left(x\right)$ and $T_{2n}\left(x\right)$ (see e.g. \cite{key-1-1})
and simplifying appropriately.
\end{proof}
\noindent For the case (iii) of Theorem 2, the first $50\,000$ values
of $\left(M,a,c\right)$ are given in \cite{key-7-5}. There exist
also other triplets of values of $\left(M,a,c\right)$ such that (\ref{eq:4-1})
holds with $\delta=1$ and $\sigma=k$. Table 4 shows some of these
values for $1<k\leq5$ and $1<M_{n},a_{n}<10^{5}$ and indexed by
$n$ for increasing values of $M_{n}$.
\begin{table}
\noindent \begin{centering}
\protect\caption{Values of $M_{n},a_{n},g_{n},c_{n}$ with $\delta=1$ and $\sigma=k$
\protect \\
for $1<k\leq5$ and $1<M_{n},a_{n}<10^{5}$}

\par\end{centering}

\noindent \centering{}%
\begin{tabular}{|c|c|c|c|c|c|}
\hline 
$k$ & $n$ & $M_{n}$ & $a_{n}$ & $g_{n}$ & $c_{n}$\tabularnewline
\hline 
\hline 
2 & 1 & 8 & 28 & 252 & 504\tabularnewline
\hline 
2 & 2 & 63 & 217 & 15624 & 31248\tabularnewline
\hline 
2 & 3 & 496 & 1705 & 968440 & 1936880\tabularnewline
\hline 
2 & 4 & 3905 & 13420 & 60027660 & 120055320\tabularnewline
\hline 
\hline 
3 & 1 & 18 & 153 & 2907 & 8721\tabularnewline
\hline 
3 & 2 & 323 & 2737 & 936054 & 2808162\tabularnewline
\hline 
3 & 3 & 5796 & 49105 & 301406490 & 904219470\tabularnewline
\hline 
\hline 
4 & 1 & 32 & 496 & 16368 & 65472\tabularnewline
\hline 
4 & 2 & 1023 & 15841 & 16728096 & 66912384\tabularnewline
\hline 
\hline 
5 & 1 & 50 & 1225 & 62475 & 312375\tabularnewline
\hline 
5 & 2 & 2499 & 61201 & 156062550 & 780312750\tabularnewline
\hline 
\end{tabular}
\end{table}
 The generalization of the case (iii) of Theorem 2 to other values
of $M$ is included in a following more general theorem. This theorem
will use the solutions of simple and generalized Pell equations, that
are recalled in the next section.

\section{Pell equations: A Reminder}

\noindent Pell equations of the general form

\begin{equation}
X^{2}-DY^{2}=N\label{eq:1}
\end{equation}

\noindent with $X,Y,N\in\mathbb{Z}$ and square free $D\in\mathbb{Z}^{+}$,
i.e. $\sqrt{D}\notin\mathbb{Z}^{+}$, have been investigated in various
forms since long (see historical accounts in \cite{key-1-13,key-2,key-5,key-8})
and are treated in several classical text books (see e.g. \cite{key-7,key-3,key-1,key-1a}
and references therein). A simple reminder is given here and further
details can be found in the references.

\noindent For $N=1$, the simple Pell equation reads classically 
\begin{equation}
X^{2}-DY^{2}=1\label{eq:2}
\end{equation}
which has, beside the trivial solution $(X_{0},Y_{0})=(1,0)$, a whole
infinite branch of solutions $\forall n\in\mathbb{Z}^{+}$ given by
\begin{eqnarray}
X_{n} & = & \frac{\left(X_{1}+\sqrt{D}Y_{1}\right)^{n}+\left(X_{1}-\sqrt{D}Y_{1}\right)^{n}}{2}\label{eq:3}\\
Y_{n} & = & \frac{\left(X_{1}+\sqrt{D}Y_{1}\right)^{n}-\left(X_{1}-\sqrt{D}Y_{1}\right)^{n}}{2\sqrt{D}}\label{eq:4}
\end{eqnarray}
where $(X_{1},Y_{1})$ is the fundamental solution to (\ref{eq:2}),
i.e. the smallest integer solution different from the trivial solution
($X_{1}>1,Y_{1}>0,\in\mathbb{Z}^{+}$). Among the five methods listed
by Robertson \cite{key-9} to find the fundamental solution $(X_{1},Y_{1})$,
the classical method based on the continued fraction expansion of
the quadratic irrational $\sqrt{D}$ introduced by Lagrange \cite{key-10-1-1}
is at the core of several other methods. It can be summarized as follows.
One computes the $r^{\textnormal{th}}$ convergent $\left(p_{r}/q_{r}\right)$
of the continued fraction $\left[\alpha_{0};\alpha_{1},...,\alpha_{r},\alpha_{r+1},...\right]$
of $\sqrt{D}$ which becomes periodic after the following term $\alpha_{r+1}=2\alpha_{0}$
if $\sqrt{D}$ is a quadratic surd or quadratic irrational (i.e. $\sqrt{D}\notin\mathbb{Z}^{+}$)
and with $\alpha_{0}=\left\lfloor \sqrt{D}\right\rfloor $ i.e. the
greatest integer $\leq\sqrt{D}$ . The terms $p_{i}$ and $q_{i}$
can be found by the recurrence relations 
\begin{equation}
p_{i}=\alpha_{i}p_{i-1}+p_{i-2}\,\,,\,\,\, q_{i}=\alpha_{i}q_{i-1}+q_{i-2}\label{eq:5-1}
\end{equation}
with $p_{-2}=0,p_{-1}=1,q_{-2}=1,q_{-1}=0$. The fundamental solution
is then $(X_{1},Y_{1})=\left(p_{r},q_{r}\right)$ if $r\equiv1\left(mod\,2\right)$
or $(X_{1},Y_{1})=\left(p_{2r+1},q_{2r+1}\right)$ if $r\equiv0\left(mod\,2\right)$. 

\noindent For the general case of $N\neq1$, the generalized Pell
equation (\ref{eq:1}) can have either no solution at all, or one
or several fundamental solutions $\left(X_{1},Y_{1}\right)$, and
all integer solutions, if they exist, come on double infinite branches
that can be expressed in function of the fundamental solution(s) $\left(X_{1},Y_{1}\right)$
and $\left(X_{1},-Y_{1}\right)$. Several authors (see e.g. \cite{key-10-1-1,key-1-11,key-7,key-17,key-9,key-15,key-16,key-1a}
and references therein) discussed how to find the fundamental solution(s)
of the generalized Pell equation, based on Lagrange's method of continued
fractions with various modifications (see e.g. \cite{key-1-2}), and
further how to find additional solutions from the fundamental solution(s).
The method indicated by Matthews \cite{key-16} based on an algorithm
by Frattini \cite{key-6-1,key-6-2,key-6-3} using Nagell's bounds
\cite{key-7,key-6-4}, will be used further.

\noindent Noting now $\left(x_{f},y_{f}\right)$ the fundamental solutions
of the related simple Pell equation (\ref{eq:3}), the other solutions
$\left(X_{n},Y_{n}\right)$ can be found from the fundamental solution(s)
by 
\begin{equation}
X_{n}+\sqrt{D}Y_{n}=\pm\left(X_{1}+\sqrt{D}Y_{1}\right)\left(x_{f}+\sqrt{D}y_{f}\right)^{n}\label{eq:5}
\end{equation}
for a proper choice of sign $\pm$ \cite{key-9}, or by the recurrence
relations
\begin{eqnarray}
X_{n} & = & x_{f}X_{n-1}+Dy_{f}Y_{n-1}\label{eq:6}\\
Y_{n} & = & x_{f}Y_{n-1}+y_{f}X_{m-1}\label{eq:7}
\end{eqnarray}

\noindent that can also be written as
\begin{eqnarray}
X_{n} & = & 2x_{f}X_{n-1}-X_{n-2}\label{eq:8-1}\\
Y_{n} & = & 2x_{f}Y_{n-1}-Y_{n-2}\label{eq:9-1}
\end{eqnarray}

\noindent or in function of Chebyshev's polynomials of the first kind
$T_{n-1}\left(x_{f}\right)$ and of the second kind $U_{n-2}\left(x_{f}\right)$
evaluated at $x_{f}$ (see \cite{key-3-1})
\begin{eqnarray}
X_{n} & = & X_{1}T_{n-1}\left(x_{f}\right)+DY_{1}y_{f}U_{n-2}\left(x_{f}\right)\label{eq:8}\\
Y_{n} & = & X_{1}y_{f}U_{n-2}\left(x_{f}\right)+Y_{1}T_{n-1}\left(x_{f}\right)\label{eq:9}
\end{eqnarray}

\noindent For $N=\eta^{2}$ an integer square, the generalized Pell
equation (\ref{eq:1}) admits always integer solutions. The variable
change $\left(X^{\prime},Y^{\prime}\right)=\left(\left(X/\eta\right),\left(Y/\eta\right)\right)$
transforms the generalized Pell equation in a simple Pell equation
$X{}^{\prime2}-DY{}^{\prime2}=1$ which has integer solutions $\left(X_{n}^{\prime},Y_{n}^{\prime}\right)$.
The integer solutions to the generalized Pell equation can then be
found as $\left(X_{n},Y_{n}\right)=\left(\eta X_{n}^{\prime},\eta Y_{n}^{\prime}\right)$,
or from ((\ref{eq:8-1}),(\ref{eq:9-1})) with $\left(X_{0},Y_{0}\right)=\left(1,0\right)$
and $\left(X_{1},Y_{1}\right)=\left(\eta x_{f},\eta y_{f}\right)$,
yielding simply
\begin{eqnarray}
X_{n} & = & \eta T_{n}\left(x_{f}\right)\label{eq:12-2}\\
Y_{n} & = & \eta y_{f}U_{n-1}\left(x_{f}\right)\label{eq:13-1}
\end{eqnarray}

\noindent (which is also valid for the simple Pell equation (\ref{eq:2})
with $\eta=1$). Note however that not all solutions in $\left(X,Y\right)$
may be found in this way (see e.g.\cite{key-1}) and, depending on
the value of $D$, other fundamental solutions may exist.

\section{Step 3: A More General Approach}

\noindent The three cases of Theorem 2 can now be generalized as shown
in the next theorem, that includes also the general method to find
all solutions $\left(M,a,c\right)$ such that (\ref{eq:4-1}) holds.
\begin{thm}
\noindent For $\forall\delta,n\in\mathbb{Z}^{+}$, $\exists\sigma,a,M,M_{n},c,x_{f},y_{f},X_{1},Y_{1},D,N\in\mathbb{Z}^{+}$,
$\kappa\in\mathbb{Q}^{+}$, with $\gcd\left(\delta,\sigma\right)=1$,$\kappa=\left(\sigma/\delta\right)>1$,
such as (\ref{eq:4-1}) holds with 
\begin{eqnarray}
M_{n} & = & X_{1}y_{f}U_{n-1}\left(x_{f}\right)+Y_{1}T_{n}\left(x_{f}\right)\label{eq:4-3}\\
a_{n} & = & \frac{1}{2\delta^{2}}\left[\left(\sigma^{2}-\delta^{2}\right)\left(X_{1}+\left(\sigma^{2}+\delta^{2}\right)Y_{1}\right)y_{f}U_{n-1}\left(x_{f}\right)+\right.\label{eq:5-2-3}\\
 &  & \left.\left(X_{1}+\left(\sigma^{2}-\delta^{2}\right)Y_{1}\right)T_{n}\left(x_{f}\right)+\delta^{2}\right]\nonumber \\
c_{n} & = & \frac{\sigma}{2\delta^{3}}\left(X_{1}y_{f}U_{n-1}\left(x_{f}\right)+Y_{1}T_{n}\left(x_{f}\right)\right)\label{eq:5-3-1}\\
 &  & \left[\left(\sigma^{2}X_{1}+\left(\sigma^{4}-\delta^{4}\right)Y_{1}\right)y_{f}U_{n-1}\left(x_{f}\right)+\left(X_{1}+\sigma^{2}Y_{1}\right)T_{n}\left(x_{f}\right)\right]\nonumber 
\end{eqnarray}
where $\left(x_{f},y_{f}\right)$ and $\left(X_{1},Y_{1}\right)$
are the fundamental solutions of respectively the simple (\ref{eq:2})
and generalized Pell equations (\ref{eq:1}) with $D=\left(\sigma^{4}-\delta^{4}\right)$
and $N=\delta^{4}$, and $T_{n}\left(x_{f}\right)$ and $U_{n}\left(x_{f}\right)$
Chebyshev's Polynomials of the first and second kind evaluated at
$x_{f}$.\end{thm}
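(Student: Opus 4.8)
The plan is to convert the single integrality constraint on $g$ into a generalized Pell equation, solve that equation by the machinery recalled in Section 4, and then back-substitute into the closed forms produced by Theorem 1. Starting from the expression (\ref{eq:14-1}) for $g$, I would first observe that $g\in\mathbb{Z}^{+}$ forces the radicand $M^{2}(\kappa^{4}-1)+1$ to be a perfect rational square; after clearing the factor $\delta^{2}$ this amounts to requiring
\[
X^{2}=M^{2}(\sigma^{4}-\delta^{4})+\delta^{4}
\]
for some $X\in\mathbb{Z}^{+}$. Rearranged, this is exactly the generalized Pell equation (\ref{eq:1}) with $D=\sigma^{4}-\delta^{4}$ and $N=\delta^{4}$, in which $M$ plays the role of the variable $Y$.

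The second step exploits the fact that $N=\delta^{4}=(\delta^{2})^{2}$ is a perfect square. By the discussion around (\ref{eq:12-2})--(\ref{eq:13-1}), a generalized Pell equation with square right-hand side always admits integer solutions, organized into branches generated from the fundamental solution $(x_{f},y_{f})$ of the associated simple equation $X^{2}-DY^{2}=1$ and a fundamental solution $(X_{1},Y_{1})$ of $X^{2}-DM^{2}=\delta^{4}$. The $Y$-component of the Chebyshev-polynomial solution formula (\ref{eq:9}) then yields directly $M_{n}=X_{1}y_{f}U_{n-1}(x_{f})+Y_{1}T_{n}(x_{f})$, which is (\ref{eq:4-3}); its companion $X$-component is $X_{n}=X_{1}T_{n}(x_{f})+DY_{1}y_{f}U_{n-1}(x_{f})$.

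The third step is pure back-substitution. Combining (\ref{eq:12-1}) with (\ref{eq:14-1}) gives $2a+M-1=(\sigma^{2}M+X)/\delta^{2}$, hence
\[
a=\frac{(\sigma^{2}-\delta^{2})M+X+\delta^{2}}{2\delta^{2}},
\]
while (\ref{eq:12-3}) gives $c=\sigma M(\sigma^{2}M+X)/(2\delta^{3})$. Inserting $M=M_{n}$ and $X=X_{n}$, and using the factorization $D=(\sigma^{2}-\delta^{2})(\sigma^{2}+\delta^{2})$ to regroup the coefficients of $y_{f}U_{n-1}(x_{f})$ and of $T_{n}(x_{f})$, I expect the $a$-expression to collapse exactly to (\ref{eq:5-2-3}) and the product $\sigma M_{n}(\sigma^{2}M_{n}+X_{n})/(2\delta^{3})$ to collapse to (\ref{eq:5-3-1}).

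The main obstacle is not the Pell reduction, which is routine, but two points of bookkeeping. First, the index convention must be reconciled: Section 4 writes the general solution with $T_{n-1}$ and $U_{n-2}$ (see (\ref{eq:8})--(\ref{eq:9})), whereas the statement is normalized to $T_{n}$ and $U_{n-1}$, so a uniform shift of the index has to be fixed and carried consistently through the substitution. Second, and more delicate, is completeness: since $D=\sigma^{4}-\delta^{4}$ need not be squarefree and the square-$N$ equation may possess several inequivalent fundamental solution classes, one must argue that letting $(X_{1},Y_{1})$ range over all such classes and $n$ over $\mathbb{Z}^{+}$ recovers every admissible triple $(M,a,c)$, rather than only the single branch furnished by the reduction $(X_{1},Y_{1})=(\delta^{2}x_{f},\delta^{2}y_{f})$ noted after (\ref{eq:13-1}). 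Checking that the regrouped coefficients are genuine positive integers, in particular that $\delta^{2}\mid(\sigma^{2}-\delta^{2})M_{n}+X_{n}+\delta^{2}$, is then the last routine verification.
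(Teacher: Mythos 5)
Your proposal follows essentially the same route as the paper's proof: requiring the radicand $M^{2}\left(\kappa^{4}-1\right)+1$ to be a square, rewriting this as the generalized Pell equation $\left(\delta^{2}C\right)^{2}-\left(\sigma^{4}-\delta^{4}\right)M^{2}=\delta^{4}$, extracting $M_{n}$ from the Chebyshev-polynomial solution formulas, and back-substituting into the expressions for $a$ and $c$ from Theorem 1, and your regrouped coefficients collapse exactly to (\ref{eq:5-2-3}) and (\ref{eq:5-3-1}). The two bookkeeping points you flag are real but handled by the paper only in remarks: the index shift from $T_{n-1},U_{n-2}$ to $T_{n},U_{n-1}$ is applied silently, and the integrality/completeness caveat corresponds to the paper's observation after the theorem that solutions from certain fundamental classes (e.g. $\left(\sigma^{2},1\right)$) give integer $M_{n}$ but non-integer $a_{n},c_{n}$ and must be rejected.
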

\begin{proof}
\noindent Let $n,M,M_{n},a,a_{n},c,c_{n},\sigma,\delta,x_{f},y_{f},X_{1},Y_{1}\in\mathbb{Z}^{+}$,
$k\in\mathbb{Z}^{*}$, $\kappa,C,C_{n}\in\mathbb{\mathbb{Q}}^{+}$,
with $M>1$, $\gcd\left(\delta,\sigma\right)=1$, $\kappa=\left(\sigma/\delta\right)>1$.
As $g$ (\ref{eq:14-1}), $a$ (\ref{eq:2-2}) and $c$ (\ref{eq:3-2})
in Theorem 1 must be integers, the condition for the polynomial$\left(M^{2}\left(\kappa^{4}-1\right)+1\right)$
under the square root sign in (\ref{eq:14-1}), (\ref{eq:2-2}) and
(\ref{eq:3-2}) to be a squared integer or a squared rational allows
to find for which values of $M$ (\ref{eq:4-1}) holds. Let 
\begin{equation}
M^{2}\left(\kappa^{4}-1\right)+1=C^{2}\label{eq:16-1}
\end{equation}

\noindent which can be rewritten as a Pell equation as 
\begin{equation}
C^{2}-\left(\kappa^{4}-1\right)M^{2}=1\label{eq:17-1}
\end{equation}

\noindent or as%
\footnote{\noindent Note that (\ref{eq:17-1}) could also be written as $C^{2}-\left(\sigma^{4}-\delta^{4}\right)\left(M/\delta^{2}\right)^{2}=1$,
but not all solutions may be obtained in this way.%
}
\begin{equation}
\left(\delta^{2}C\right)^{2}-\left(\sigma^{4}-\delta^{4}\right)M^{2}=\delta^{4}\label{eq:21}
\end{equation}

\noindent which is a generalized Pell equation that always admits
at least one fundamental solution as the right hand term is a squared
integer. Noting $\left(x_{f},y_{f}\right)$ the fundamental solutions
of the simple Pell equation (\ref{eq:2}) and $\left(X_{1},Y_{1}\right)$
the fundamental solution(s) of the generalized Pell equation (\ref{eq:1})
with $X=\left(\delta^{2}C\right)$, $Y=M$, $D=\left(\sigma^{4}-\delta^{4}\right)$
and $N=\delta^{4}$, all solutions can be found by (\ref{eq:5}) or
((\ref{eq:8}),(\ref{eq:9})) $\forall n\in\mathbb{Z}^{+}$ yielding
\begin{equation}
C_{n}=\frac{\left(\left(\sigma^{4}-\delta^{4}\right)Y_{1}y_{f}U_{n-1}\left(x_{f}\right)+X_{1}T_{n}\left(x_{f}\right)\right)}{\delta^{2}}\label{eq:5-7}
\end{equation}
and (\ref{eq:4-3}). Then replacing $M$ by $M_{n}$ (\ref{eq:4-3})
in (\ref{eq:2-2}) and (\ref{eq:3-2}) yields directly (\ref{eq:5-2-3})
and (\ref{eq:5-3-1}).
\end{proof}
\noindent Note that, although (\ref{eq:4-3}) yields all integer solutions
in $M$ to (\ref{eq:21}), some of them do not yield integer solutions
to $a_{n}$ (\ref{eq:5-2-3}) and $c_{n}$ (\ref{eq:5-3-1}) and must
be rejected.

\noindent For $\delta=1$, (\ref{eq:21}) is a simple Pell equation
similar to (\ref{eq:2}). It is easy to see that its fundamental solution
is $\left(x_{f},y_{f}\right)=\left(\sigma^{2},1\right)$ (see e.g.
\cite{key-1-2}). (\ref{eq:21}) admits then an infinitude of solutions
$\forall n\in\mathbb{Z}^{+}$ for each integer value of $\sigma$,
that can be written as ((\ref{eq:3}), (\ref{eq:4})) or ((\ref{eq:12-2}),
(\ref{eq:13-1})), yielding $C_{n}=T_{n}\left(\sigma^{2}\right)$
and
\begin{eqnarray}
M_{n} & = & U_{n-1}\left(\sigma^{2}\right)\label{eq:18}\\
a_{n} & = & \frac{U_{n}\left(\sigma^{2}\right)-U_{n-1}\left(\sigma^{2}\right)+1}{2}\label{eq:19}\\
c_{n} & = & \frac{\sigma U_{n}\left(\sigma^{2}\right)U_{n-1}\left(\sigma^{2}\right)}{2}\label{eq:20}
\end{eqnarray}

\noindent where the relation $U_{n}\left(\sigma^{2}\right)=T_{n}\left(\sigma^{2}\right)+\sigma^{2}U_{n-1}\left(\sigma^{2}\right)$
(see e.g. \cite{key-1-1}) was used in (\ref{eq:19}) and (\ref{eq:20}).
Note as well that $g_{n}=U_{n}\left(\sigma^{2}\right)U_{n-1}\left(\sigma^{2}\right)/2$
as can be found from (\ref{eq:12-3}) or (\ref{eq:12-1}).

\noindent This generalizes the case (iii) of Theorem 2 and gives an
infinitude of solutions $\left(M_{n},a_{n},c_{n}\right)$ $\forall n\in\mathbb{Z}^{+}$
for $\delta=1$ and for each value of $\sigma$, $\forall\sigma\in\mathbb{Z}^{+}$.

\noindent For $\delta>1$, three of the fundamental solutions are
always $\left(X_{1},Y_{1}\right)=\left(\delta^{2},0\right)$ and $\left(\sigma^{2},\pm1\right)$,
corresponding to respectively $\left(C,M\right)=\left(1,0\right)$
and $\left(\left(\sigma^{2}/\delta^{2}\right),\pm1\right)$. Depending
on the value of $D=\left(\sigma^{4}-\delta^{4}\right)$, other fundamental
solutions may exist. All solutions in $M$ can then be found by (\ref{eq:4-3})
$\forall n\in\mathbb{Z}^{+}$ for $\left(X_{1},Y_{1}\right)=\left(\delta^{2},0\right)$
or $\left(\sigma^{2},\pm1\right)$, and $\forall n\in\mathbb{Z}^{*}$
for other fundamental solutions. Furthermore, solutions found for
$\left(X_{1},Y_{1}\right)=\left(\sigma^{2},-1\right)$ and $\left(\delta^{2},0\right)$
yield integer values for $M_{n},a_{n}$ and $c_{n}$, while the solutions
found for $\left(X_{1},Y_{1}\right)=\left(\sigma^{2},1\right)$, although
yielding integer values of $M_{n}$, do not yield integer values for
$a_{n}$ and $c_{n}$, as can be seen easily from (\ref{eq:5-2-3})
and (\ref{eq:5-3-1}), and must be rejected, although these non-integer
values satisfy (\ref{eq:4-1}). 

\noindent Relations (\ref{eq:5-7}) and (\ref{eq:4-3}) to (\ref{eq:5-3-1})
read

\noindent - for $\left(X_{1},Y_{1}\right)=\left(\sigma^{2},-1\right)$,
$C_{n}=\kappa^{2}T_{n}\left(x_{f}\right)-\delta^{2}\left(\kappa^{4}-1\right)y_{f}U_{n-1}\left(x_{f}\right)$
and 
\begin{eqnarray}
M_{n} & = & \sigma^{2}y_{f}U_{n-1}\left(x_{f}\right)-T_{n}\left(x_{f}\right)\label{eq:18-1-1}\\
a_{n} & = & \frac{T_{n}\left(x_{f}\right)-\left(\sigma^{2}-\delta^{2}\right)y_{f}U_{n-1}\left(x_{f}\right)+1}{2}\label{eq:19-1-1}\\
c_{n} & = & \frac{\sigma\delta y_{f}U_{n-1}\left(x_{f}\right)\left(\sigma^{2}y_{f}U_{n-1}\left(x_{f}\right)-T_{n}\left(x_{f}\right)\right)}{2}\label{eq:20-1-1}
\end{eqnarray}

\noindent - for $\left(X_{1},Y_{1}\right)=\left(\delta^{2},0\right)$,
$C_{n}=T_{n}\left(x_{f}\right)$ and 
\begin{eqnarray}
M_{n} & = & \delta^{2}y_{f}U_{n-1}\left(x_{f}\right)\label{eq:18-1}\\
a_{n} & = & \frac{T_{n}\left(x_{f}\right)+\left(\sigma^{2}-\delta^{2}\right)y_{f}U_{n-1}\left(x_{f}\right)+1}{2}\label{eq:19-1}\\
c_{n} & = & \frac{\sigma\delta y_{f}U_{n-1}\left(x_{f}\right)\left(\sigma^{2}y_{f}U_{n-1}\left(x_{f}\right)+T_{n}\left(x_{f}\right)\right)}{2}\label{eq:20-1}
\end{eqnarray}

\noindent Note that the solutions (\ref{eq:18-1-1}) to (\ref{eq:20-1-1})
found for $\left(X_{1},Y_{1}\right)=\left(\sigma^{2},-1\right)$ are
smaller than those (\ref{eq:18-1}) to (\ref{eq:20-1}) found for
$\left(X_{1},Y_{1}\right)=\left(\delta^{2},0\right)$ for a same value
of $n\geq1$. 

\noindent Furthermore, if $2g$ is a multiple of $M,$ $2g=\mu M$,
$\mu\in\mathbb{Z}^{+}$, (like for $M=33,35,42...)$, the fundamental
solutions of the simple Pell equation are $\left(x_{f},y_{f}\right)=\left(\left(\mu\sigma^{2}-M\right),\mu\right)$,
as can be easily verified.

\noindent If $g=\triangle_{M}$ (like for $M=5,15,...$) or $\triangle_{M-1}$
(like for $M=5,13,15,17,40,...$), then
\begin{eqnarray}
\left(x_{f},y_{f}\right) & = & \left(\left(\left(M\pm1\right)\sigma^{2}-M\right),\left(M\pm1\right)\right)\label{eq:17}\\
 & = & \left(\left(2\left|\delta^{4}-2\sigma^{2}+1\right|\left(\frac{\sigma^{2}-1}{\left|\delta^{4}-2\sigma^{2}+1\right|}\right)^{2}-1\right),\right.\nonumber \\
 &  & \left.\left(\frac{2\left(\sigma^{2}-1\right)}{\left|\delta^{4}-2\sigma^{2}+1\right|}\right)\right)\label{eq:18-2}
\end{eqnarray}

\noindent with $M=\left(\delta^{4}-1\right)/\left(\left|2\sigma^{2}-\delta^{4}-1\right|\right)$
(see \cite{key-5-6}) with the $+$ (respectively $-$) sign in (\ref{eq:17})
for $g=\triangle_{M}$ (resp. $\triangle_{M-1}$) and vertical bars
denote the absolute value. 

\noindent Values of $\left(x_{f},y_{f}\right)$ and $\left(X_{1},Y_{1}\right)$
from \cite{key-16} yielding solutions $\left(M_{n},a_{n},c_{n}\right)$
for the first values of $M$, $1<M\leq45$ and $1<a<10^{5}$ of Table
1 can be found in \cite{key-5-6}. 

\noindent For the case (i) of Theorem 2 with $M=\left(2k+1\right)$,
$g=\left(M^{2}-1\right)/4$, $\delta=2M$ and $\sigma=\left(2M^{2}-1\right)$,
one of the other fundamental solutions is always
\begin{eqnarray}
\left(X_{1},Y_{1}\right) & = & \left(\left(\frac{\delta\left(\sigma^{2}-2\right)}{2}\right),\frac{\delta}{2}\right)\label{eq:5-14}\\
 & = & \left(M\left(\left(2M^{2}-1\right)^{2}-2\right),M\right)\label{eq:5-15}
\end{eqnarray}
as can be easily verified. More generally, all solutions for this
case can be found by (\ref{eq:5-2}) to (\ref{eq:6-3}).

\noindent For the case (ii) of Theorem 2 with $a=\left(2k+1\right)^{2}$,
$g=\triangle_{k}$, $\delta=16\triangle_{k}+1=2a-1$, $\sigma=16\triangle_{k}+3=2a+1$,
$M=k\delta=k\left(16\triangle_{k}+1\right)$, one of the other fundamental
solutions is always known and can be expressed in function of $\delta$,
$k$ or $a$ as
\begin{eqnarray}
\left(X_{1},Y_{1}\right) & = & \left(\left(4\delta\left(\sqrt{\frac{\delta+1}{2}}\right)^{3}\left(\sqrt{\frac{\delta+1}{2}}-1\right)\right),2\delta\right)\\
 & = & \left(\left(4\left(16\triangle_{k}+1\right)\left(8k\left(2k+1\right)^{3}+1\right)\right),2\left(16\triangle_{k}+1\right)\right)\\
 & = & \left(\left(2a-1\right)a^{2}\left(1-a^{-1/2}\right),2\left(2a-1\right)\right)
\end{eqnarray}

\section{Recurrent Relations}

\noindent The sets of solutions $\left(M_{n},a_{n},c_{n}\right)$
are obviously not independent. As (\ref{eq:4-3}) to (\ref{eq:5-3-1})
are linear combinations of Chebyshev polynomials, one has also the
general recurrence relations 

\noindent 
\begin{eqnarray}
M_{n} & = & 2x_{f}M_{n-1}-M_{n-2}\label{eq:6-1-3-1}\\
a_{n} & = & 2x_{f}a_{n-1}-a_{n-2}-\left(x_{f}-1\right)\label{eq:6-2-1}\\
c_{n} & = & 2\left(2x_{f}^{2}-1\right)c_{n-1}-c_{n-2}+\delta\sigma^{3}y_{f}^{2}\label{eq:6-3-1-1}
\end{eqnarray}

\noindent among values of $M_{n},a_{n},c_{n}$ calculated for the
same fundamental solutions $\left(X_{1},Y_{1}\right)$. These relations
are immediate from (\ref{eq:4-3}) to (\ref{eq:5-3-1}) and the recurrence
and other formulas for Chebyshev polynomials (see e.g \cite{key-1-1}).
For the sake of the recurrence, initial values of $\left(M_{n},a_{n},c_{n}\right)$
for $n=0,1$ are shown in Table 5 for the cases (i) $M_{n}=k\delta_{n}$
of Theorem 3, (ii) $\delta=1$ and $\left(x_{f},y_{f}\right)=\left(\sigma^{2},1\right)$,
(iii) $\delta>1$ and $\left(X_{1},Y_{1}\right)=\left(\sigma^{2},-1\right)$,
(iv) $\delta>1$ and $\left(X_{1},Y_{1}\right)=\left(\delta^{2},0\right)$,
and (v) the general case for $\delta>1$ and with other fundamental
solutions $\left(X_{1}^{*},Y_{1}^{*}\right)$ of the generalized Pell
equation.
\begin{table}
\protect\caption{Initial values of $\left(M_{n},a_{n},c_{n}\right)$ for $n=0,1$ for
recurrence relations}

\noindent \centering{}%
\begin{tabular}{|c|c|l|}
\hline 
{\footnotesize{}$X_{1},Y_{1}$} & {\footnotesize{}$n$} & {\footnotesize{}$\left(M_{n},a_{n},c_{n}\right)$ for $M_{n}=k\delta_{n}$,
$\left(x_{f},y_{f}\right)=\left(\left(2k+1\right)^{2},0\right)$}\tabularnewline
\hline 
{\footnotesize{}$\left[1,0\right]$} & {\footnotesize{}$0$} & {\footnotesize{}$\left(1,1,1\right)$}\tabularnewline
\cline{2-3} 
 & {\footnotesize{}$1$} & {\footnotesize{}$\left(k\left(8k\left(k+1\right)+1\right),\left(2k+1\right)^{2},\left(k\left(k+1\right)/2\right)\left(4\left(2k+1\right)^{4}-1\right)\right)$}\tabularnewline
\hline 
\hline 
{\footnotesize{}$X_{1},Y_{1}$} & {\footnotesize{}$n$} & {\footnotesize{}$\left(M_{n},a_{n},c_{n}\right)$ for $\delta=1$,
$\left(x_{f},y_{f}\right)=\left(\sigma^{2},1\right)$}\tabularnewline
\hline 
{\footnotesize{}$\left[1,0\right]$} & {\footnotesize{}$0$} & {\footnotesize{}$\left(0,1,0\right)$}\tabularnewline
\cline{2-3} 
 & {\footnotesize{}$1$} & {\footnotesize{}$\left(1,\sigma^{2},\sigma^{3}\right)$}\tabularnewline
\hline 
\hline 
{\footnotesize{}$X_{1},Y_{1}$} & {\footnotesize{}$n$} & {\footnotesize{}$\left(M_{n},a_{n},c_{n}\right)$ for $\delta>1$}\tabularnewline
\hline 
{\footnotesize{}$\sigma^{2},-1$} & {\footnotesize{}$0$} & {\footnotesize{}$\left(-1,1,0\right)$}\tabularnewline
\cline{2-3} 
 & {\footnotesize{}$1$} & {\footnotesize{}$\left(\left(\sigma^{2}y_{f}-x_{f}\right),\left(x_{f}-\left(\sigma^{2}-\delta^{2}\right)y_{f}+1\right)/2,\,\,\delta\sigma y_{f}\left(\sigma^{2}y_{f}-x_{f}\right)/2\right)$}\tabularnewline
\hline 
{\footnotesize{}$\delta^{2},0$} & {\footnotesize{}$0$} & {\footnotesize{}$\left(0,1,0\right)$}\tabularnewline
\cline{2-3} 
 & {\footnotesize{}$1$} & {\footnotesize{}$\left(\delta^{2}y_{f},\,\,\left(x_{f}+\left(\sigma^{2}-\delta^{2}\right)y_{f}+1\right)/2,\,\,\delta\sigma y_{f}\left(\sigma^{2}y_{f}+x_{f}\right)/2\right)$}\tabularnewline
\hline 
{\footnotesize{}$X_{1}^{*},Y_{1}^{*}$} & {\footnotesize{}$0$} & {\footnotesize{}$\left(Y_{1}^{*},\left(X_{1}^{*}+\left(\sigma^{2}-\delta^{2}\right)Y_{1}^{*}+\delta^{2}\right)/2\delta^{2},\,\,\sigma Y_{1}^{*}\left(X_{1}^{*}+\sigma^{2}Y_{1}^{*}\right)/2\delta^{3}\right)$}\tabularnewline
\cline{2-3} 
 & {\footnotesize{}$1$} & {\footnotesize{}$\left(X_{1}^{*}y_{f}+Y_{1}^{*}x_{f},\right.$}\tabularnewline
 &  & {\footnotesize{}$\left(\left(x_{f}+\left(\sigma^{2}-\delta^{2}\right)y_{f}\right)X_{1}^{*}+\left(\sigma^{2}-\delta^{2}\right)\left(x_{f}+\left(\sigma^{2}+\delta^{2}\right)y_{f}\right)Y_{1}^{*}+\delta^{2}\right)/2\delta^{2},$}\tabularnewline
 &  & {\footnotesize{}$\left.\sigma\left(y_{f}X_{1}^{*}+x_{f}Y_{1}^{*}\right)\left(\left(x_{f}+\sigma^{2}y_{f}\right)X_{1}^{*}+\left(\sigma^{2}x_{f}+\left(\sigma^{4}-\delta^{4}\right)y_{f}\right)Y_{1}^{*}\right)/2\delta^{3}\right)$}\tabularnewline
\hline 
\end{tabular}
\end{table}

\noindent For $\delta=1$, one has also the remarkable recurrence
relation 
\begin{equation}
M_{n}=M_{n-1}+2a_{n-1}-1\label{eq:6-4-1}
\end{equation}

\noindent among all values of $M_{n}$ $\forall n\in\mathbb{Z}^{+}$
(see Table 7 further in section 7). One has also a similar relation
among values of $M_{n}$ and $a_{n}$ for $\delta>1$ if only those
solutions (\ref{eq:18-1-1}), (\ref{eq:19-1-1}) and (\ref{eq:18-1}),
(\ref{eq:19-1}) calculated respectively for $\left(X_{1},Y_{1}\right)=\left(\sigma^{2},-1\right)$
and $\left(\delta^{2},0\right)$ are considered. 

\noindent If all solutions$\left(M_{n},a_{n},c_{n}\right)$ are ordered
in increasing value order and indexed by a new index $j\in\mathbb{Z}^{+}$,
one obtains simply $j=n$ for solutions for $\delta=1$, and for $\delta>1$,
if there are no fundamental solutions other than $\left(X_{1},Y_{1}\right)=\left(\delta^{2},0\right)$
and $\left(\sigma^{2},\pm1\right)$, one obtains $j=2n-1$ for the
solutions (\ref{eq:18-1-1}) to (\ref{eq:20-1-1}) and $j=2n$ for
the solutions (\ref{eq:18-1}) to (\ref{eq:20-1}). If fundamental
solutions other than $\left(X_{1},Y_{1}\right)=\left(\delta^{2},0\right)$
and $\left(\sigma^{2},\pm1\right)$ exist, the solutions $\left(M_{n},a_{n},c_{n}\right)$
(\ref{eq:4-3}) to (\ref{eq:5-3-1}) calculated with these other fundamental
solutions (including for $n=0)$ have to be inserted accordingly (see
example further in Section 7).

\noindent For $\delta>1$, if only those value of $\left(M_{j},a_{j},c_{j}\right)$
(\ref{eq:18-1-1}) and \ref{eq:20-1-1}) and $\left(M_{j+1},a_{j+1},c_{j+1}\right)$
(\ref{eq:18-1} to \ref{eq:20-1}) calculated respectively with $\left(X_{1},Y_{1}\right)=\left(\sigma^{2},-1\right)$
and $\left(\delta^{2},0\right)$ are considered, two such sets of
solutions are called a ``recurrent pair'' and obviously, for $\delta=1$,
all sets of $\left(M_{j},a_{j},c_{j}\right)$ solutions form ``recurrent
pairs''. The following theorem give other remarkable recurrent relations.
\begin{thm}
For $\forall\delta\in\mathbb{Z}^{+}$, $\exists\sigma,\kappa,j,a_{j},M_{j},c_{j}\in\mathbb{Z}^{+}$
with $\gcd\left(\delta,\sigma\right)=1$, $\kappa=\left(\sigma/\delta\right)>1$,
and such as (\ref{eq:4-1}) holds, if $\left(M_{j},a_{j},c_{j}\right)$
and $\left(M_{j+1},a_{j+1},c_{j+1}\right)$ form a ``recurrent pair'',
then 
\begin{eqnarray}
M_{j+1} & = & M_{j}+2a_{j}-1\label{eq:25-1}\\
 & = & \kappa^{2}M_{j}+\sqrt{M_{j}^{2}\left(\kappa^{4}-1\right)+1}=\kappa^{2}M_{j}+C_{j}\label{eq:25-2}\\
M_{j} & = & \left(2\kappa^{2}-1\right)M_{j+1}-2a_{j+1}+1\label{eq:26}\\
a_{j+1}+a_{j} & = & \kappa^{2}M_{j+1}-M_{j}+1\label{eq:27-1}\\
a_{j+1}-a_{j} & = & \left(\kappa^{2}-1\right)M_{j+1}\label{eq:28}\\
c_{j} & = & \frac{\kappa M_{j}M_{j+1}}{2}\label{eq:29-1}\\
c_{j+1}+c_{j} & = & \kappa^{3}M_{j+1}^{2}\label{eq:30}
\end{eqnarray}
\end{thm}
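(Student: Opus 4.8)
The plan is to prove all seven identities by substituting the closed-form Chebyshev expressions for the two members of the recurrent pair and simplifying; no machinery is needed beyond the formulas of Section 5 and the relation $\kappa^{2}=\sigma^{2}/\delta^{2}$. By the definition of a recurrent pair (for $\delta>1$), the triple $\left(M_{j},a_{j},c_{j}\right)$ is the solution (\ref{eq:18-1-1})--(\ref{eq:20-1-1}) obtained from the fundamental solution $\left(X_{1},Y_{1}\right)=\left(\sigma^{2},-1\right)$, while $\left(M_{j+1},a_{j+1},c_{j+1}\right)$ is the solution (\ref{eq:18-1})--(\ref{eq:20-1}) obtained from $\left(\delta^{2},0\right)$, both evaluated at the same index $n$. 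Writing $U:=y_{f}U_{n-1}\left(x_{f}\right)$ and $T:=T_{n}\left(x_{f}\right)$, the two triples read $M_{j}=\sigma^{2}U-T$, $M_{j+1}=\delta^{2}U$, $a_{j}=\left(T-\left(\sigma^{2}-\delta^{2}\right)U+1\right)/2$, $a_{j+1}=\left(T+\left(\sigma^{2}-\delta^{2}\right)U+1\right)/2$, $c_{j}=\sigma\delta UM_{j}/2$ and $c_{j+1}=\sigma\delta U\left(\sigma^{2}U+T\right)/2$, so the whole proof reduces to elementary cancellation in the two variables $U$ and $T$.

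First I would treat the identities involving only $M$ and $a$. Relation (\ref{eq:25-1}) follows by inserting $2a_{j}-1=T-\left(\sigma^{2}-\delta^{2}\right)U$ into $M_{j}+2a_{j}-1$, whereupon the $T$ terms cancel and $\sigma^{2}U-\left(\sigma^{2}-\delta^{2}\right)U=\delta^{2}U=M_{j+1}$; relation (\ref{eq:26}) is the mirror computation starting from $M_{j+1}$ and $a_{j+1}$, using $2\kappa^{2}\delta^{2}=2\sigma^{2}$. For the $a$-sum and $a$-difference, adding the two $a$-formulas cancels the $U$ terms and leaves $T+1$, while subtracting cancels the $T$ terms and leaves $\left(\sigma^{2}-\delta^{2}\right)U$; substituting $\sigma^{2}-\delta^{2}=\left(\kappa^{2}-1\right)\delta^{2}$ with $M_{j+1}=\delta^{2}U$ gives (\ref{eq:28}), and combining the $a$-sum with $M_{j}=\sigma^{2}U-T$ gives (\ref{eq:27-1}). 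Identity (\ref{eq:25-2}) is then obtained either by inserting the explicit value $C_{j}=\kappa^{2}T-\delta^{2}\left(\kappa^{4}-1\right)U$ recorded in Section 5 into $\kappa^{2}M_{j}+C_{j}$ and using $\kappa^{2}\sigma^{2}=\delta^{2}\kappa^{4}$, or simply by feeding (\ref{eq:25-1}) through the Theorem 1 relation $2a_{j}-1=\left(\kappa^{2}-1\right)M_{j}+C_{j}$, which turns $M_{j}+2a_{j}-1$ into $\kappa^{2}M_{j}+C_{j}$ at once.

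Next I would dispatch the two $c$-identities. Relation (\ref{eq:29-1}) is immediate from the factored form $c_{j}=\sigma\delta UM_{j}/2$ together with $M_{j+1}=\delta^{2}U$ and $\kappa=\sigma/\delta$, since $\sigma\delta U=\kappa M_{j+1}$. For (\ref{eq:30}) I would add the two $c$-formulas: the $T$ contributions cancel because $c_{j}$ carries the factor $\left(\sigma^{2}U-T\right)$ and $c_{j+1}$ carries $\left(\sigma^{2}U+T\right)$, leaving $\sigma\delta U\cdot\sigma^{2}U=\sigma^{3}\delta U^{2}$, which equals $\kappa^{3}M_{j+1}^{2}=\kappa^{3}\delta^{4}U^{2}$ after using $\kappa^{3}\delta^{3}=\sigma^{3}$.

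The only point requiring real care is the $\delta=1$ case, where a recurrent pair is defined instead as two consecutive-index solutions $\left(M_{n},a_{n},c_{n}\right)$, $\left(M_{n+1},a_{n+1},c_{n+1}\right)$ of (\ref{eq:18})--(\ref{eq:20}). I would reconcile this with the general computation by observing, via $T_{n+1}\left(\sigma^{2}\right)=U_{n+1}\left(\sigma^{2}\right)-\sigma^{2}U_{n}\left(\sigma^{2}\right)$ and the three-term recurrence $U_{n+1}\left(\sigma^{2}\right)+U_{n-1}\left(\sigma^{2}\right)=2\sigma^{2}U_{n}\left(\sigma^{2}\right)$, that the class-$\left(\sigma^{2},-1\right)$ formula at index $n+1$ collapses to $U_{n-1}\left(\sigma^{2}\right)=M_{n}$ and its $a$-value to $a_{n}$; hence the $\delta=1$ pair is a general recurrent pair after a shift of index, and the identities above apply verbatim (alternatively one may verify (\ref{eq:27-1})--(\ref{eq:30}) directly from the same three-term recurrence). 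There is no genuine analytic obstacle: the statement is a bookkeeping exercise, and the single thing that can go wrong is attaching the wrong closed form --- the wrong fundamental-solution class or the wrong index --- to $M_{j}$ versus $M_{j+1}$, which would break every identity simultaneously.
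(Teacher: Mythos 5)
Your proposal is correct and follows essentially the same route as the paper: both proofs verify the identities from the explicit closed forms (\ref{eq:18-1-1})--(\ref{eq:20-1-1}) and (\ref{eq:18-1})--(\ref{eq:20-1}) of the two members of a recurrent pair (taken at the same Chebyshev index), reducing everything to cancellations in $T_{n}\left(x_{f}\right)$ and $y_{f}U_{n-1}\left(x_{f}\right)$, with the Theorem 1 relation available for (\ref{eq:25-2}). The only departures are cosmetic and, if anything, tidier: you substitute uniformly for all seven identities (including (\ref{eq:29-1}) and (\ref{eq:30}), which the paper instead chains from (\ref{eq:3-2}), (\ref{eq:25-2}) and (\ref{eq:26})), and you dispose of the $\delta=1$ case by an index-shift reduction to the general pattern rather than by the paper's separate argument via the recurrence (\ref{eq:6-1-3-1}).
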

\begin{proof}
Let $\sigma,\delta,j,n,k,a,M,c\in\mathbb{Z}^{+}$ with $\gcd\left(\delta,\sigma\right)=1$,
$\kappa=\left(\sigma/\delta\right)>1$, and $\left(M_{n},a_{n},c_{n}\right)$
solutions of (\ref{eq:4-1}), yielding $\left(M_{j},a_{j},c_{j}\right)$
and $\left(M_{j+1},a_{j+1},c_{j+1}\right)$ to be a ``recurrent pair''
with $n=j$ for $\delta=1$, and, for $\delta>1$, $n=j$ or $n=j+1$
respectively for $\left(M_{n},a_{n},c_{n}\right)$ (\ref{eq:18-1-1})
to (\ref{eq:20-1-1}) for $\left(X_{1},Y_{1}\right)=\left(\sigma^{2},-1\right)$
or (\ref{eq:18-1}) to (\ref{eq:20-1}) for $\left(X_{1},Y_{1}\right)=\left(\delta^{2},0\right)$.
Then,

\noindent (i) (\ref{eq:25-1}) is immediate from (\ref{eq:18}) and
(\ref{eq:19}) for $\delta=1$, and from (\ref{eq:18-1-1}), (\ref{eq:19-1-1})
and (\ref{eq:18-1}) for $\delta>1$.

\noindent (ii) (\ref{eq:25-2}) is immediate from (\ref{eq:12-1}),
(\ref{eq:14-1}) and (\ref{eq:16-1}). 

\noindent (iii) For $\delta=1$, replacing $M_{j+2}$ by the recurrence
relation (\ref{eq:6-1-3-1}) in (\ref{eq:25-1}) written for $M_{j+2}$
yields directly (\ref{eq:26}). For $\delta>1$, in $\left(2\kappa^{2}-1\right)M_{j+1}-2a_{j+1}+1$,
replacing $M_{j+1}$ and $a_{j+1}$ by (\ref{eq:18-1}) and (\ref{eq:19-1})
yields
\begin{eqnarray}
\left(2\kappa^{2}-1\right)M_{j+1}-2a_{j+1}+1 & = & \left(2\sigma^{2}-\delta^{2}\right)y_{f}U_{n-1}\left(x_{f}\right)-\nonumber \\
 &  & \left(T_{n}\left(x_{f}\right)+\left(\sigma^{2}-\delta^{2}\right)y_{f}U_{n-1}\left(x_{f}\right)\right)\label{eq:32-1}\\
 & = & \sigma^{2}y_{f}U_{n-1}\left(x_{f}\right)-T_{n}\left(x_{f}\right)\label{eq:33-1}
\end{eqnarray}

\noindent which is $M_{n}$ (\ref{eq:18-1-1}). Therefore (\ref{eq:26})
holds for $\delta\geq1$.

\noindent (iv) Summing and subtracting $a_{j+1}$ and $a_{j}$ extracted
respectively from (\ref{eq:26}) and (\ref{eq:25-1}) yield directly
(\ref{eq:27-1}) and (\ref{eq:28}). 

\noindent (v) (\ref{eq:3-2}) and (\ref{eq:25-2}) yield directly
(\ref{eq:29-1}). 

\noindent (vi) Replacing $M_{j}$ from (\ref{eq:26}) in (\ref{eq:29-1})
and replacing with $c_{j+1}$ from (\ref{eq:29-1}) yield directly
(\ref{eq:30}).
\end{proof}
\noindent This theorem means that once a solution $\left(M_{n},a_{n},c_{n}\right)$
has been found for $\delta>1$ from (\ref{eq:18-1-1}) to (\ref{eq:20-1-1})
with $\left(X_{1},Y_{1}\right)=\left(\sigma^{2},-1\right)$, the other
solution $\left(M_{n},a_{n},c_{n}\right)$ of the \textquotedbl{}recurrent
pair\textquotedbl{} can be found directly from (\ref{eq:25-1}) to
(\ref{eq:30}) without having to be calculated from (\ref{eq:18-1})
to (\ref{eq:20-1}) for $\left(X_{1},Y_{1}\right)=\left(\delta^{2},0\right)$.

\section{Summary and Examples}

\noindent In summary, to find all solutions $\left(M,a,c\right)$
such that the sum of $M$ consecutive cubed integers starting from
$a>1$ equal a squared integer $c^{2}$, the following approach is
proposed.

\noindent 1) Calculate first all solutions for odd integer values
of $M=\left(2k+1\right)$ $\forall k\in\mathbb{Z}^{+}$ by (\ref{eq:5-2})
to (\ref{eq:6-3}). Table 6 shows the first ten values of an infinitude
of solutions. 
\begin{table}
\protect\caption{Values of $\left(M,a,c\right)$ for $M=\left(2k+1\right)$ , $1\leq k\leq10$}

\centering{}%
\begin{tabular}{|l|}
\hline 
{\footnotesize{}(3, 23, 204), (5, 118, 2940), (7, 333, 16296), (9,
716, 57960), (11, 1315, 159060), }\tabularnewline
{\footnotesize{}(13, 2178, 368004), (15, 3353, 754320), (17, 4888,
1412496), (19, 6831, 2465820),}\tabularnewline
{\footnotesize{}(21, 9230, 4070220)}\tabularnewline
\hline 
\end{tabular}
\end{table}

\noindent 2) Second, calculate all solutions for the cases $M_{n}=k\delta_{n}$
given by Theorem 3 $\forall k,n\in\mathbb{Z}^{+}$ either by (\ref{eq:36-2}),
(\ref{eq:37-2}), (\ref{eq:38-2}) in function of Chebyshev polynomials,
or by the recurrence relations (\ref{eq:6-1-3-1}) to (\ref{eq:6-3-1-1})
with $\left(x_{f},y_{f}\right)=\left(\left(2k+1\right)^{2},0\right)$
and the initial values of Table 5 (see Table 3).

\noindent 3) Third, calculate all solutions for the case $\delta=1$
$\forall\sigma\in\mathbb{Z}^{+}$ either by (\ref{eq:18}) to (\ref{eq:20}),
or by the recurrence relations (\ref{eq:6-1-3-1}) to (\ref{eq:6-3-1-1})
with $\left(x_{f},y_{f}\right)=\left(\sigma^{2},1\right)$ and the
initial values of Table 5. Table 7 shows the first five values of
the infinitude of solutions for $2\leq\sigma\leq5$. 
\begin{table}
\protect\caption{Values of $\left(M_{n},a_{n},c_{n}\right)$ for $\delta=1$, $2\leq\sigma\leq5$
and $2\leq n\leq6$}

\centering{}%
\begin{tabular}{|l|l|}
\hline 
\multicolumn{1}{|c|}{$\sigma=2$} & \multicolumn{1}{c|}{$\sigma=3$}\tabularnewline
\hline 
{\footnotesize{}(8, 28, 504)} & {\footnotesize{}(18, 153, 8721)}\tabularnewline
{\footnotesize{}(63, 217, 31248)} & {\footnotesize{}(323, 2737, 2808162)}\tabularnewline
{\footnotesize{}(496, 1705, 1936880)} & {\footnotesize{}(5796, 49105, 904219470)}\tabularnewline
{\footnotesize{}(3905, 13420, 120055320)} & {\footnotesize{}(104005, 881145, 291155861205)}\tabularnewline
{\footnotesize{}(30744, 105652, 7441492968)} & {\footnotesize{}(1866294, 15811497, 93751283088567)}\tabularnewline
\hline 
\multicolumn{1}{|c|}{$\sigma=4$} & \multicolumn{1}{c|}{$\sigma=5$}\tabularnewline
\hline 
{\footnotesize{}(32, 496, 65472)} & {\footnotesize{}(50, 1225, 312375)}\tabularnewline
{\footnotesize{}(1023, 15841, 66912384)} & {\footnotesize{}(2499, 61201, 780312750)}\tabularnewline
{\footnotesize{}(32704, 506401, 68384391040)} & {\footnotesize{}(124900, 3058801, 1949220937250)}\tabularnewline
{\footnotesize{}(1045505, 16188976, 69888780730560)} & {\footnotesize{}(6242501, 152878825, 4869153120937875)}\tabularnewline
{\footnotesize{}(33423456, 517540816, 71426265522241344)} & {\footnotesize{}(312000150, 7640882425, 12163142546881874625)}\tabularnewline
\hline 
\end{tabular}
\end{table}

\noindent 4) Fourth, for all other cases with $\delta>1$, find the
fundamental solutions of the simple and generalized Pell equations
with $D=\left(\sigma^{4}-\delta^{4}\right)$ and $N=\delta^{4}$ for
all values of $\sigma\in\mathbb{Z}^{+}$ and of $\delta\in\mathbb{Z}^{+}$
such that $1<\delta<\sigma$ and $\gcd\left(\delta,\sigma\right)=1$.

\noindent 4.1) If there are no other fundamental solutions than $\left(X_{1},Y_{1}\right)=\left(\delta^{2},0\right)$
and $\left(\sigma^{2},\pm1\right)$, then $g=\mu M/2$ ($\mu\in\mathbb{Z}^{+}$)
and $\left(x_{f},y_{f}\right)=\left(\left(\mu\sigma^{2}-M\right),\mu\right)$;
if $g=\triangle_{M}$ or $\triangle_{M-1}$, then $M_{1}=\left(\delta^{4}-1\right)/\left(\left|\delta^{4}-2\sigma^{2}+1\right|\right)$
and (\ref{eq:18-2}) gives $\left(x_{f},y_{f}\right)$. Then (\ref{eq:18-1-1})
to (\ref{eq:20-1}) yield an infinitude of integer solutions $\left(M_{n},a_{n},c_{n}\right)$
$\forall n\in\mathbb{Z}^{+}$ with $\left(X_{1},Y_{1}\right)=\left(\sigma^{2},-1\right),\left(\delta^{2},0\right)$.
Alternatively, the recurrence relations (\ref{eq:6-1-3-1}) to (\ref{eq:6-3-1-1})
with the initial values of Table 5 or the recurrence relations of
Theorem 5 can be used. Solutions have to be ordered then in increasing
order of $M_{j}$.

\noindent For $\sigma=3$ and $\delta=2$, $g=120=\triangle_{15}$,
$D=65$ and $N=16$ yielding $\left(x_{f},y_{f}\right)=\left(129,16\right)$
and only three fundamental solutions $\left(X_{1},Y_{1}\right)=\left(4,0\right)$
and $\left(9,\pm1\right)$ (see \cite{key-16}). The first ten solutions
are shown in Table 8 for $\left(X_{1},Y_{1}\right)=\left(9,-1\right)$
and $\left(4,0\right)$, arranged by $M_{j}$ increasing values and
with respectively $j=2n-1$ and $j=2n$. 
\begin{table}
\protect\caption{Values of $\left(M_{j},a_{j},c_{j}\right)$ for $\delta=2$, $\sigma=3$
with $\left(x_{f},y_{f}\right)=\left(129,16\right)$ and $1\leq n\leq5$
with $j=2n-1$ and $j=2n$ respectively for $\left(X_{1},Y_{1}\right)=\left(9,-1\right)$
and $\left(4,0\right)$}

\noindent \centering{}%
\begin{tabular}{|c|c|c|l|}
\hline 
{\footnotesize{}$X_{1},Y_{1}$} & {\footnotesize{}$n$} & {\footnotesize{}$j$} & {\footnotesize{}$\left(M_{j},a_{j},c_{j}\right)$}\tabularnewline
\hline 
\hline 
{\footnotesize{}9, -1} & {\footnotesize{}1} & {\footnotesize{}1} & {\footnotesize{}(15, 25, 720)}\tabularnewline
\hline 
{\footnotesize{}4, 0} & {\footnotesize{}1} & {\footnotesize{}2} & {\footnotesize{}(64, 105, 13104)}\tabularnewline
\hline 
{\footnotesize{}9, -1} & {\footnotesize{}2} & {\footnotesize{}3} & {\footnotesize{}(3871, 6321, 47938464)}\tabularnewline
\hline 
{\footnotesize{}4, 0} & {\footnotesize{}2} & {\footnotesize{}4} & {\footnotesize{}(16512, 26961, 872242272)}\tabularnewline
\hline 
{\footnotesize{}9, -1} & {\footnotesize{}3} & {\footnotesize{}5} & {\footnotesize{}(998703, 1630665, 3190880053872)}\tabularnewline
\hline 
{\footnotesize{}4, 0} & {\footnotesize{}3} & {\footnotesize{}6} & {\footnotesize{}(4260032, 6955705, 58058190109584)}\tabularnewline
\hline 
{\footnotesize{}9, -1} & {\footnotesize{}4} & {\footnotesize{}7} & {\footnotesize{}(257661503, 420705121, 212391358097903424)}\tabularnewline
\hline 
{\footnotesize{}4, 0} & {\footnotesize{}4} & {\footnotesize{}8} & {\footnotesize{}(1099071744, 1794544801, 3864469249201901760)}\tabularnewline
\hline 
{\footnotesize{}9, -1} & {\footnotesize{}5} & {\footnotesize{}9} & {\footnotesize{}(66475669071, 108540290425, 14137193574521767668240)}\tabularnewline
\hline 
{\footnotesize{}4, 0} & {\footnotesize{}5} & {\footnotesize{}10} & {\footnotesize{}(283556249920, 462985602825, 257226802107318794853360)}\tabularnewline
\hline 
\end{tabular}
\end{table}

\noindent 4.2) If there are fundamental solutions $\left(X_{1},Y_{1}\right)=\left(X_{1}^{*},\pm Y_{1}^{*}\right)$
other than $\left(\delta^{2},0\right)$ and $\left(\sigma^{2},\pm1\right)$,
then $\left(x_{f},y_{f}\right)$ has to be calculated separately (see
\cite{key-16}). All integer solutions $\left(M_{n},a_{n},c_{n}\right)$
are found by (\ref{eq:4-3}) to (\ref{eq:5-3-1}) for $\left(X_{1},Y_{1}\right)=\left(X_{1}^{*},Y_{1}^{*}\right)$
$\forall n\in\mathbb{Z}^{*}$ and by (\ref{eq:18-1-1}) to (\ref{eq:20-1})
for $\left(X_{1},Y_{1}\right)=\left(\sigma^{2},-1\right),\left(\delta^{2},0\right)$
$\forall n\in\mathbb{Z}^{+}$. Alternatively, the recurrence relations
(\ref{eq:6-1-3-1}) to (\ref{eq:6-3-1-1}) can be used with the initial
values of Table 5. Solutions have to be ordered then in increasing
order of $M_{j}$. 

\noindent For $\sigma=4$ and $\delta=3$, $D=175$ and $N=81$ yielding
$\left(x_{f},y_{f}\right)=\left(2024,153\right)$ and five fundamental
solutions $\left(X_{1},Y_{1}\right)=\left(9,0\right),\left(16,\pm1\right)$
and $\left(159,\pm12\right)$ (see \cite{key-16}). The first ten
solutions are shown in Table 9 for $\left(X_{1},Y_{1}\right)=\left(159,12\right),\left(16,-1\right)$
and $\left(9,0\right)$, arranged by $M_{j}$ increasing values and
with respectively $j=3n+1$, $j=3n-1$ and $j=3n$. 
\begin{table}
\protect\caption{Values of $\left(M_{j},a_{j},c_{j}\right)$ for $\delta=3$, $\sigma=4$
with $\left(x_{f},y_{f}\right)=\left(2024,153\right)$ and $0\leq n\leq3$
with $j=3n+1$, $j=3n-1$ and $j=3n$ respectively for $\left(X_{1},Y_{1}\right)=\left(159,12\right),\left(16,-1\right)$
and $\left(9,0\right)$ }

\noindent \centering{}%
\begin{tabular}{|c|c|c|l|}
\hline 
{\footnotesize{}$X_{1},Y_{1}$} & {\footnotesize{}$n$} & {\footnotesize{}$j$} & {\footnotesize{}$\left(M_{j},a_{j},c_{j}\right)$ }\tabularnewline
\hline 
\hline 
{\footnotesize{}159,12} & {\footnotesize{}0} & {\footnotesize{}1} & {\footnotesize{}(12, 14, 312)}\tabularnewline
\hline 
{\footnotesize{}16,-1} & {\footnotesize{}1} & {\footnotesize{}2} & {\footnotesize{}(424, 477, 389232)}\tabularnewline
\hline 
{\footnotesize{}9,0} & {\footnotesize{}1} & {\footnotesize{}3} & {\footnotesize{}(1377, 1548, 4105296)}\tabularnewline
\hline 
{\footnotesize{}159,12} & {\footnotesize{}1} & {\footnotesize{}4} & {\footnotesize{}(48615, 54635, 5117020440)}\tabularnewline
\hline 
{\footnotesize{}16,-1} & {\footnotesize{}2} & {\footnotesize{}5} & {\footnotesize{}(1716353, 1928872, 6378077594592)}\tabularnewline
\hline 
{\footnotesize{}9,0} & {\footnotesize{}2} & {\footnotesize{}6} & {\footnotesize{}(5574096, 6264280, 67270624549920)}\tabularnewline
\hline 
{\footnotesize{}159,12} & {\footnotesize{}2} & {\footnotesize{}7} & {\footnotesize{}(196793508, 221160443, 83849042274507096)}\tabularnewline
\hline 
{\footnotesize{}16,-1} & {\footnotesize{}3} & {\footnotesize{}8} & {\footnotesize{}(6947796520, 7808071356, 104513105644422184080)}\tabularnewline
\hline 
{\footnotesize{}9,0} & {\footnotesize{}3} & {\footnotesize{}9} & {\footnotesize{}(22563939231, 25357801869, 1102316769603603585072)}\tabularnewline
\hline 
{\footnotesize{}159,12} & {\footnotesize{}3} & {\footnotesize{}10} & {\footnotesize{}(796620071769, 895257416606, 1373975729120835063673080)}\tabularnewline
\hline 
\end{tabular}
\end{table}

\section{\noindent Conclusions}

\noindent The approach proposed in this paper to find all solutions
$\left(M,a,c\right)$ to (\ref{eq:2-3}) by investigating two other
parameters, $\delta$ and $\sigma$, instead of investigating each
individual values of $M$ one by one in an increasing order of $M$
values allows to find quite simple and elegant general solutions $\left(M,a,c\right)$
based on solutions of simple and generalized Pell equations involving
Chebyshev polynomials. Alternatively, recurrence relations can be
used in order to simplify the computational part. This approach allows
to find all possible solutions $\left(M,a,c\right)$ to (\ref{eq:2-3})
but with the drawback that solutions are not ordered by increasing
$M$ values. 

\noindent However, it is found that there are always at least one
solution for every cases of all odd values of $M$, of all odd integer
square values of $a$, and of all even values of $M$ equal to twice
an integer square.


\begin{thebibliography}{10}
\bibitem[1]{key-1-41} L. Aubry, Sphinx-Oedipe, 8, 28-9, 1913.

\bibitem[2]{key-1-47} M. Cantor, Nouvelles Annales de Mathématiques,
2, 6, 276-278, 1867.

\bibitem[3]{key-1-43} J.W.S. Cassels, A Diophantine equation, Glasgow
Mathematical Journal, 27, 11-18, 1985.

\bibitem[4]{key-1-45} E. Catalan, Bulletin Académie Royale de Belgique,
2, 22, 339-340, 1866.

\bibitem[5]{key-1-46} E. Catalan, Nouvelles Annales de Mathématiques,
2, 6, 63-67, 1867.

\bibitem[6]{key-1-11} G. Chrystal, Algebra - An Elementary Text-Book,
Part II, 1st ed. Adam and Charles Black, 1900; 2nd ed., New York,
Chelsea, 478-488, 1961. 

\bibitem[7]{key-1-13} L.E. Dickson, Sum of cubes of numbers in arithmetical
progression a square, Ch. XXI in History of the Theory of Numbers,
Vol. 2: Diophantine Analysis, Dover, New York, 585-588, 2005.

\bibitem[8]{key-6-1} G. Frattini, Dell\textquoteright analisi indeterminata
di secondo grado, Periodico di Mat. VI, 169\textendash 180, 1891.

\bibitem[9]{key-6-2} G. Frattini, A complemento di alcuni teore mi
del sig. Tchebicheff, Rom. Acc. L. Rend. 5, I No. 2, 85-91, 1892.

\bibitem[10]{key-6-3} G. Frattini, Dell\textquoteright analisi indeterminata
di secondo grado, Periodico di Mat. VII, 172\textendash 177, 1892. 

\bibitem[11]{key-1a} M. Jacobson, H. Williams, Solving the Pell Equation,
Springer-Verlag New York, 2009.

\bibitem[12]{key-10-1-1} J.L. Lagrange, Solution d'un Problème d'Arithmétique,
in Oeuvres de Lagrange, J.-A. Serret (ed.), Vol. 1, Gauthier-Villars,
Paris, 671\textendash 731, 1867 (see http://gdz.sub.uni-goettingen.de/en/dms/loader/img/?PPN=PPN308899644\&DMDID=
DMDLOG\_0024\&LOGID=LOG\_0024\&PHYSID=PHYS\_0726, last accessed 10
August 2014).

\bibitem[13]{key-8} F. Lemmermeyer, Pell equation bibliography, 1658-1943,
http://www.fen.bilkent.edu.tr/\textasciitilde{}franz/publ/pell.html,
last accessed 2 November 2013.

\bibitem[14]{key-2} H. W. Lenstra Jr., Solving the Pell Equation,
Notices of The AMS, Vol. 49, Nr 2, 182-192, 2002.

\bibitem[15]{key-1-40} E. Lucas, Recherches sur l'analyse indéterminée,
Moulins, 92, 1873. Extract from Bull. Soc d'Emulation du Département
de l'Allier, 12, 532, 1873.

\bibitem[16]{key-15} K.R. Matthews, The Diophantine Equation $x^{2}-Dy^{2}=N$,
$D>0$, in integers, Expositiones Mathematicae, 18, 323-331, 2000.

\bibitem[17]{key-16} K. Matthews, Quadratic Diophantine equations
BCMATH programs, http://www.numbertheory.org/php/main\_pell.html,
last accessed 3 January 2015.

\bibitem[18]{key-6-4} K. Matthews, J. Robertson, On the converse
of a theorem of Nagell and Tchebicheff, Preprint submitted to Expositiones
Mathematicae, 6 April 2014 (see http://www.numbertheory.org/pdfs/nagell2.pdf,
last accessed 27 July 2014).

\bibitem[19]{key-17} R.A. Mollin, Fundamental Number Theory with
Applications, CRC Press, New York, 294-307, 1998.

\bibitem[20]{key-7} T. Nagell, Introduction to Number Theory, Wiley,
New York, 195-212, 1951.

\bibitem[21]{key-3} J.J. O'Connor and E.F. Robertson, Pell's equation,
JOC/EFR February 2002 http://www-history.mcs.st-andrews.ac.uk/HistTopics/Pell.html,
last accessed 26 July 2014.

\bibitem[22]{key-1-2} V. Pletser, On continued fraction development
of quadratic irrationals having all periodic terms but last equal
and associated general solutions of the Pell equation, Journal of
Number Theory, Vol. 136, 339\textendash 353, 2013.

\bibitem[23]{key-3-1} V. Pletser, Finding all squared integers expressible
as the sum of consecutive squared integers using generalized Pell
equation solutions with Chebyshev polynomials, ArXiv, http://arxiv.org/abs/1409.7972,
29 September 2014.

\bibitem[24]{key-5-2} V. Pletser, Numbers whose square equals sums
of cubes of consecutive integers, Sequence A180921 in The On-line
Encyclopedia of Integer Sequences, published electronically at http://oeis.org,
24 September 2010, last accessed 3 January 2015. 

\bibitem[25]{key-5-3} V. Pletser, Numbers of terms in sums of cubes
of consecutive integers equal to squared integers, Sequence A180920
in The On-line Encyclopedia of Integer Sequences, published electronically
at http://oeis.org, 24 September 2010, last accessed 3 January 2015.

\bibitem[26]{key-5-5} V. Pletser, Number of terms, first term and
square root of sums of consecutive cubed integers equal to integer
squares, ResearchGate, https://www.researchgate.net/publication/271272786\_Number\_of\_terms
\_first\_term\_and\_square\_root\_of\_sums\_of\_consecutive\_cubed\_integers
\_equal\_to\_integer\_squares, 24 January 2015.

\bibitem[27]{key-7-0} V. Pletser, Triplets (n, x, y) with n,x less
than 10\textasciicircum{}5, https://oeis.org/A218979/a218979\_1.txt,
10 January 2015, in M. Markus, Numbers n such that the sum of the
n consecutive positive cubes is a square, Sequence A218979 in The
On-line Encyclopedia of Integer Sequences, published electronically
at http://oeis.org, last accessed 24 January 2015.

\bibitem[28]{key-7-1} V. Pletser, Numbers a(n) that are the starting
terms in the sum of an odd number of consecutive cubes equal to a
square, Sequence A253679 in The On-line Encyclopedia of Integer Sequences,
published electronically at http://oeis.org and https://oeis.org/A253679/a253679.txt,
8 January 2015.

\bibitem[29]{key-7-2} V. Pletser, Numbers c(n) whose square are equal
to the sum of an odd number M of consecutive cubed integers b\textasciicircum{}3
+ (b+1)\textasciicircum{}3 + ... + (b+M-1)\textasciicircum{}3 = c\textasciicircum{}2,
starting at b(n) (A253679)., Sequence A253680 in The On-line Encyclopedia
of Integer Sequences, published electronically at http://oeis.org,
submitted 8 January 2015.

\bibitem[30]{key-7-3} V. Pletser, Numbers M(n) which are the number
of terms in the sums of consecutive cubed integers equaling a squared
integer, b\textasciicircum{}3 + (b+1)\textasciicircum{}3 + ... + (b+M-1)\textasciicircum{}3
= c\textasciicircum{}2, for a first term b(n) being an odd squared
integer (A016754)., Sequence A253707 in The On-line Encyclopedia of
Integer Sequences, published electronically at http://oeis.org, submitted
9 January 2015.

\bibitem[31]{key-7-4} V. Pletser, Numbers c(n) whose squares are
equal to the sums of consecutive cubed integers b\textasciicircum{}3
+ (b+1)\textasciicircum{}3 + ... + (b+M-1)\textasciicircum{}3 = c\textasciicircum{}2,
for a first term b(n) being an odd squared integer (A016754)., Sequence
A253708 in The On-line Encyclopedia of Integer Sequences, published
electronically at http://oeis.org, submitted 9 January 2015.

\bibitem[32]{key-7-5} V. Pletser, Numbers c(n) whose squares are
equal to the sums of a number M(n) of consecutive cubed integers b\textasciicircum{}3
+ (b+1)\textasciicircum{}3 + ... + (b+M-1)\textasciicircum{}3 = c\textasciicircum{}2,
starting at b(n) (A002593) for M(n) being twice a squared integer
(A001105)., Sequence A253724 in The On-line Encyclopedia of Integer
Sequences, published electronically at http://oeis.org, submitted
10 January 2015.

\bibitem[33]{key-5-6} V. Pletser, Fundamental solutions of the Pell
equation $X^{2}-\left(\sigma^{4}-\delta^{4}\right)Y^{2}=\delta^{4}$
for the first 45 solutions of the sums of consecutive cubed integers
equalling integer squares, ArXiv, http://arxiv.org/abs/1167238, submitted
24 January 2015. 

\bibitem[34]{key-1-48} C. Richaud, Zeitschrift für Mathematik und
Physik, 12, 170-172, 1867.

\bibitem[35]{key-9} J.P. Robertson, Solving the generalized Pell
equation $X^{2}-DY^{2}=N$, 31 July 2004 (see http://www.jpr2718.org/pell.pdf,
last accessed 26 July 2014).

\bibitem[36]{key-1-1} J. Spanier and K.B. Oldham, An Atlas of Functions,
Springer-Verlag, 193-207, 1987.

\bibitem[37]{key-1-44} R.J. Stroeker, On the sum of consecutive cubes
being a perfect square, Compositio Mathematica, 97, 295-307, 1995.

\bibitem[38]{key-5} A. Weil, Number Theory, an Approach through History,
Birkhäuser, Boston, 1984.

\bibitem[39]{key-1} E.W. Weisstein, Pell Equation, from MathWorld--A
Wolfram Web Resource. http://mathworld.wolfram.com/PellEquation.html,
last accessed 10 August 2014.\end{thebibliography}
\end{document}